\let\oldmarginpar\marginpar
\renewcommand\marginpar[1]{\oldmarginpar[\raggedleft\footnotesize #1]%
{\raggedright\footnotesize #1}}
\renewcommand{\setminus}{{\smallsetminus}}
\newcommand{\ZZ}{{\mathbb{Z}}}
\newcommand{\NN}{{\mathbb{N}}}
\newcommand{\QQ}{{\mathbb{Q}}}
\newcommand{\CalD}{{\mathcal{W}}}
\newcommand{\bdy}{{\partial}}
\newcommand{\guts}{{\rm guts}}
\newcommand{\cut}{{\backslash \backslash}}
\newcommand{\vol}{{\rm vol}}
\newcommand{\negeul}{{\chi_{-}}}
\newcommand{\abs}[1]{{\left\vert #1 \right\vert}}
\newcommand{\GA}{{\mathbb{G}_A}}
\newcommand{\GB}{{\mathbb{G}_B}}
\newcommand{\G}{{\mathbb{G}}}
\newcommand{\GRA}{{\mathbb{G}'_A}}
\newcommand{\GRB}{{\mathbb{G}'_B}}
\newcommand{\A}{{\mathbb A}}
\theoremstyle{plain}
\newtheorem{theorem}{Theorem}[section]
\newtheorem{corollary}[theorem]{Corollary}
\newtheorem{lemma}[theorem]{Lemma}
\newtheorem{prop}[theorem]{Proposition}
\newtheorem*{namedtheorem}{\theoremname}
\newcommand{\theoremname}{testing}
\theoremstyle{definition}
\newtheorem{define}[theorem]{Definition}
\newtheorem{remark}[theorem]{Remark}
\begin{document}
\title[Jones polynomials, volume, and essential knot surfaces: A survey]{Jones polynomials, volume and essential knot surfaces: A survey}
\author[D. Futer]{David Futer}
\author[E. Kalfagianni]{Efstratia Kalfagianni}
\author[J. Purcell]{Jessica S. Purcell}

\address[]{Department of Mathematics, Temple University,
Philadelphia, PA 19122, USA}

\email[]{dfuter@temple.edu}

\address[]{Department of Mathematics, Michigan State University, East
Lansing, MI 48824, USA}

\email[]{kalfagia@math.msu.edu}

\address[]{ Department of Mathematics, Brigham Young University,
Provo, UT 84602, USA}

\email[]{jpurcell@math.byu.edu }
\thanks{{D.F. is supported in part by NSF grant DMS--1007221.}}

\thanks{{E.K. is supported in part by NSF grants DMS--0805942 and DMS--1105843.}}

\thanks{{J.P. is supported in part by NSF grant  DMS--1007437 and a Sloan Research Fellowship.}}

\thanks{ \today}

\begin{abstract}
This paper is a brief overview of recent results by the authors relating colored Jones polynomials to geometric topology. The proofs of these results appear  in the papers  \cite{fkp:PAMS, fkp:gutsjp}, while this survey focuses on the main ideas and examples.
\end{abstract}

\maketitle

\section*{Introduction}\label{sec:intro}

To every knot in $S^3$ there corresponds a 3--manifold, namely the
knot complement.  This 3--manifold decomposes along tori into
geometric pieces, where the most typical scenario is that all of $S^3
\setminus K$ supports a complete hyperbolic metric
\cite{thurston:bulletin}. Incompressible surfaces embedded in $S^3
\setminus K$ play a crucial role in understanding its classical
geometric and topological invariants.

The quantum knot invariants, including the Jones polynomial and its
relatives, the colored Jones polynomials, have their roots in
representation theory and physics \cite{jones, Tu}, and are well
connected to topological quantum field theory \cite{witten}.  While
the constructions of these invariants seem to be unrelated to the
geometries of 3--manifolds, in fact topological quantum field theory
predicts that the Jones polynomial knot invariants are closely related
to the hyperbolic geometry of knot complements \cite{wittengravity}.  In
particular, the \emph{volume conjecture} of R. Kashaev, H.~Murakami,
and J.~Murakami \cite{kashaev:vol-conj, murakami:vol-conj,
  murakami:volconj-survey,  gukov:vc} asserts that the volume of a hyperbolic
knot is determined by certain asymptotics of colored Jones
polynomials.  There is also growing evidence indicating direct
relations between the coefficients of the Jones and colored Jones
polynomials and the volume of hyperbolic links.  For example,
numerical computations show such relations \cite{ckp:simplest-knots},
as do theorems proved for several classes of links, including
alternating links \cite{dasbach-lin:head-tail}, closed $3$--braids
\cite{fkp:farey}, highly twisted links \cite{fkp:filling}, and certain
sums of alternating tangles \cite{fkp:conway}.

In a recent monograph \cite{fkp:gutsjp}, the authors have initiated a new approach to studying these relations, focusing on the topology of incompressible surfaces in knot
complements. The motivation behind studying surfaces is as follows.
 On the one hand, certain spanning surfaces of
knots have been shown to carry information on colored Jones
polynomials \cite{dasbach-futer...}.  On the other hand,
incompressible surfaces also shed light on volumes of manifolds
\cite{ast} and additional geometry and topology
(e.g. \cite{adams:quasi-fuchsian, menasco:incompress, miyamoto}).
With these ideas in mind, we developed  a machine that allows us to
establish relationships between colored Jones polynomials and
topological/geometric invariants.

The purpose of this paper is to give an overview of recent results,
especially those of \cite{fkp:gutsjp}, and some of their applications.
The content is an expanded version of talks given by the authors at
the conferences \emph{Topology and geometry in dimension three}, in
honor of William Jaco, at Oklahoma State University in June 2010;
\emph{Knots in Poland III} at the Banach Center in Warsaw, Poland, in
July 2010; as well as in seminars and department colloquia.  This
paper includes background and motivation, along with several examples
that did not appear in the original lectures.  Many figures in this
survey are drawn from slides for those lectures, as well as from the papers
 \cite{fkp:gutsjp, fkp:filling,
  fkp:PAMS}.
  
This paper is organized as follows. In sections \ref{sec:1D},
\ref{sec:2D}, and \ref{sec:3D}, we develop several connections between
(colored) Jones polynomials and topological objects of
the corresponding dimension.  That is, Section \ref{sec:1D} describes the
connection between these polynomial invariants and certain \emph{state
  graphs} associated to a link diagram. Section \ref{sec:2D} describes
the \emph{state surfaces} associated to these state graphs, and
explains the connection of these surfaces to the sequence of degrees
of the colored Jones polynomial. Section \ref{sec:3D} dives into the
$3$--dimensional topology of the complement of each state surface, and
contains most of our main theorems.  In Section \ref{sec:example}, we
illustrate the main theorems with a detailed example. Finally, in
Section \ref{sec:polyhedra}, we describe the polyhedral decomposition
that plays a key role in our proofs.

\section{State graphs and the Jones polynomial}\label{sec:1D}

The first objects we consider are 1--dimensional: graphs built from
the diagram of a knot or link.  We will see that these graphs
have relationships to the coefficients of the colored
Jones polynomials, and that a ribbon version of one of these graphs encodes the entire Jones polynomial.  In later sections, we will also see relationships 
between the graphs and quantities in geometric topology.

\subsection{Graphs and state graphs}

Associated to a diagram $D$ and a crossing $x$ of $D$ are two link
diagrams, each with one fewer crossing than $D$.  These are obtained
by removing the crossing $x$, and reconnecting the diagram in one of
two ways, called the \emph{$A$--resolution} and \emph{$B$--resolution}
of the crossing, shown in Figure \ref{fig:splicing}.

\begin{figure}[h]
	\centerline{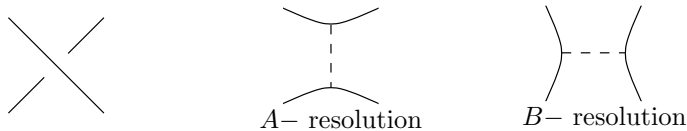}
\caption{$A$-- and $B$--resolutions of a crossing.}
\label{fig:splicing}
\end{figure}

For each crossing of $D$, we may make a choice of $A$--resolution or
$B$--resolution, and end up with a crossing--free diagram.  Such a
choice of $A$-- or $B$--resolutions is called a \emph{Kauffman state}, denoted
$\sigma$. The resulting crossing--free diagram is denoted by
$s_\sigma$.

The first graph associated with our diagram will be trivalent.  We
start with the crossing--free diagram given by a state.  The
components of this diagram are called \emph{state circles}.  For each
crossing $x$ of $D$, attach an edge from the state circle on one side
of the crossing to the other, as in the dashed lines of Figure
\ref{fig:splicing}.  Denote the resulting graph by $H_\sigma$.  Edges
of $H_\sigma$ come from state circles and crossings; there are two
trivalent vertices for each crossing.

To obtain the second graph, collapse each state circle of $H_\sigma$ to a vertex.
Denote the result by $\G_\sigma$.  The vertices of $\G_\sigma$
corespond to state circles, and the edges correspond to crossings of $D$.  The
graph $\G_\sigma$ is called the \emph{state graph} associated to
$\sigma$.

In the special case where each state circle of $\sigma$ traces a region of the diagram $D(K)$, the state graph $\G_\sigma$ is called a \emph{checkerboard graph} or \emph{Tait graph}. These checkerboard graphs record the adjacency pattern of regions of the diagram, and have been studied since the work of Tait and Listing in the 19th century. See e.g.\ \cite[Page 264]{przytycki-survey}.

\begin{figure}
\includegraphics{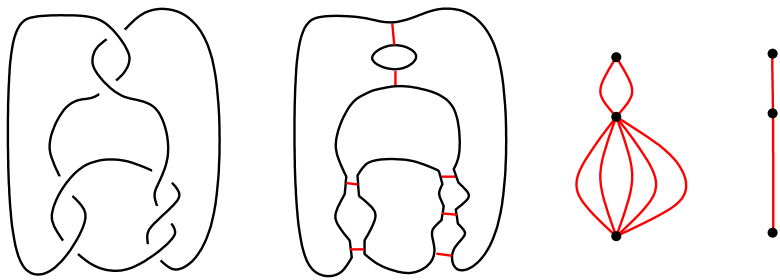}
\caption{Left to right:  A diagram, the graphs $H_A$, $\GA$, and $\GRA$. }
\label{fig:state-graph}
\end{figure}

Our primary focus from here on will be on the all--$A$ state, which
consists of choosing the $A$--resolution at each crossing, and
similarly the all--$B$ state.  Their corresponding state graphs are
denoted $\G_A$ and $\G_B$. An example of a diagram, as well as the
graphs $H_A$ and $\GA$ that result from the all--$A$ state, is shown
in Figure \ref{fig:state-graph}.

For the all--$A$ and all--$B$ states, we define graphs $\GRA$ and
$\GRB$ by removing all duplicate edges between pairs of vertices of
$\G_A$ and $\G_B$, respectively.  Again see Figure
\ref{fig:state-graph}.  

The following definition, formulated by Lickorish and Thistlethwaite
\cite{lick-thistle, thi:adequate}, captures the class of link diagrams
whose Jones polynomial invariants are especially well--behaved.  
  
\begin{define}\label{def:adequate}
A link diagram $D(K)$ is called $A$--adequate\index{$A$--adequate}
(resp. $B$--adequate\index{$B$--adequate}) if $\GA$ (resp. $\GB$) has
no 1--edge loops.  If $D(K)$ is both $A$ and $B$--adequate, then
$D(K)$ and $K$ are called \emph{adequate}\index{adequate diagram}.
\end{define}

We will devote most of our attention to $A$--adequate knots and
links. Because the mirror image of a $B$--adequate knot is
$A$--adequate, this includes the $B$--adequate knots up to reflection.
We remark that the class of $A$-- or $B$--adequate links is large.  It
includes all prime knots with up to 10 crossings, alternating links,
positive and negative closed braids, closed 3--braids, Montesinos
links, and planar cables of all the above \cite{lick-thistle,
  stoimenow, thi:adequate}.  In fact, Stoimenow has computed that
  there are only two knots of 11 crossings and a handful of 12 crossing knots that are not $A$-- or $B$--adequate.  Furthermore,
among the 253,293 prime knots with 15 crossings tabulated in Knotscape
\cite{knotscape}, at least 249,649 are either $A$--adequate or
$B$--adequate \cite{stoimenow}.  

Recall that a link diagram $D$ is called \emph{prime} if any
simple closed curve that meets the diagram transversely in two points
bounds a region of the projection plane without any crossings.  A
prime knot or link admits a prime diagram.

\subsection{The Jones polynomial from the state graph viewpoint}
Here, we recall a topological construction that allows us to recover the Jones polynomial of any knot or link from a certain $2$--dimensional embedding of $\GA$.  

A connected link diagram $D$ leads to the construction of a \emph{Turaev surface} \cite{turaevs}, as follows.  
Let $\Gamma \subset S^2$ be the planar, 4--valent graph of the link
diagram.  Thicken the (compactified) projection plane to a slab $S^2 \times
[- 1, 1]$, so that $\Gamma$ lies in $S^2 \times \{0\}$. Outside a
neighborhood of the vertices (crossings), our surface will intersect
this slab in $\Gamma \times [- 1, 1]$. In the neighborhood of
each vertex, we insert a saddle, positioned so that the boundary
circles on $S^2 \times \{1\}$ are the
components
of the $A$--resolution $s_A(D)$, and the boundary circles on $S^2
\times \{- 1\}$ are the components of $s_B(D)$. (See Figure
\ref{fig:saddle}.) Then, we cap off each circle with a disk, obtaining
an unknotted closed surface $F(D)$.

\begin{figure}[ht]
\psfrag{sa}{$s_A$}
\psfrag{sb}{$s_B$}
\psfrag{g}{$\Gamma$}
\begin{center}
\includegraphics{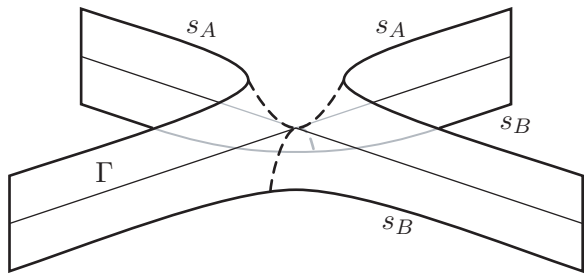}
\caption{Near each crossing of the diagram, a saddle surface interpolates 
  between circles of $s_A(D)$ and circles of $s_B(D)$. The edges of
  $\GA$ and $\GB$ can be seen as gradient lines at the saddle.}
\label{fig:saddle}
\end{center}
\end{figure}

In the special case when $D$ is an alternating diagram, each circle of
$s_A(D)$ or $s_B(D)$ follows the boundary of a region in the
projection plane. Thus, for alternating diagrams, the surface $F(D)$
is exactly the projection sphere $S^2$. For general diagrams, it is
still the case that the knot or link has an alternating projection to
$F(D)$ \cite[Lemma 4.4]{dasbach-futer...}.

The construction of the Turaev surface $F(D)$ endows it with a natural
cellulation, whose $1$--skeleton is the graph $\Gamma$ and whose
$2$--cells correspond to circles of $s_A(D)$ or $s_B(D)$, hence to
vertices of $\GA$ or $\GB$. These $2$--cells admit a natural
checkerboard coloring, in which the regions corresponding to the
vertices of $\GA$ are white and the regions corresponding to $\GB$ are
shaded. The graph $\GA$ (resp. $\GB$) can be embedded in $F(D)$ as the
adjacency graph of white (resp. shaded) regions.
Note that the \emph{faces} of $\GA$ (that is, regions in the complement of $\GA$) correspond to vertices of $\GB$, and vice versa. In other words, the graphs are dual to one another on $F(D)$.

A graph, together with an embedding into an orientable surface, is often called a \emph{ribbon graph}. Ribbon graphs and their polynomial invariants have been studied by many authors, including Bollobas and Riordan  \cite{bo-ri,
 bo-ri1}. Building on this point of view,
 Dasbach, Futer, Kalfagianni, Lin and Stoltzfus
\cite{dasbach-futer...} showed that the ribbon graph embedding of $\GA$ into the Turaev surface $F(D)$ carries at least as much information as the Jones polynomial $J_K(t)$.
To state the relevant result from \cite{dasbach-futer...},
we need the following definition.

\begin{define}\label{defi:spasubgraph}
  A \emph{spanning} subgraph of $\GA$ is a subgraph that contains all
  the vertices of $\GA$.  Given a spanning subgraph $\G$ of $\GA$ we
  will use $v(\G)$, $e(\G)$ and $f(\G)$ to denote the number of
  vertices, edges and faces of $\G$ respectively. 
\end{define}

\begin{theorem}[\cite{dasbach-futer...}]\label{JPgraph} 
Let $D$ be a connected link diagram.  Then the Kauffman bracket $\langle D \rangle\in
\ZZ[A, A^{-1}]$ can be expressed as
$$\langle D \rangle = \sum_{\G\subset 
  \GA}^{\phantom{a}}\ A^{e(\GA)-2e(\G)} (-A^2-A^{-2})^{f(\G)-1},$$
where $\G$ ranges over all the spanning subgraphs of $\GA$.
\end{theorem}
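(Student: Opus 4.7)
The plan is to reindex the defining Kauffman bracket state sum
\[
\langle D \rangle = \sum_\sigma A^{a(\sigma) - b(\sigma)} (-A^2 - A^{-2})^{|s_\sigma| - 1},
\]
where $\sigma$ runs over the $2^n$ Kauffman states of $D$, $a(\sigma)$ and $b(\sigma)$ count the $A$- and $B$-resolutions of $\sigma$, and $|s_\sigma|$ is the number of components of the crossing-free diagram $s_\sigma$. Since the edges of $\GA$ are in bijection with the crossings of $D$ (one edge per crossing), there is a natural bijection between Kauffman states and spanning subgraphs of $\GA$: to $\sigma$ associate the subgraph $\G(\sigma) \subset \GA$ whose edges correspond exactly to the crossings at which $\sigma$ selects the $B$-resolution. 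Under this bijection, $e(\G) = b(\sigma)$ and $e(\GA) - e(\G) = a(\sigma)$, so
\[
A^{a(\sigma) - b(\sigma)} = A^{e(\GA) - 2e(\G)},
\]
matching the first factor in the statement exactly.

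The heart of the proof is to show that under the same bijection, $|s_\sigma| = f(\G)$. Here $f(\G)$ must be read as the number of boundary circles of a regular neighborhood $N(\G)$ of $\G$ in the Turaev surface $F(D)$ (equivalently, the number of faces of the ribbon subgraph $\G \subset \GA \subset F(D)$); an abstract or purely planar face count would not even be well defined, since $\GA$ need not be planar. I would prove the equality by induction on $e(\G)$. In the base case $\G = \emptyset$, the state $\sigma$ is the all--$A$ state, and both quantities equal $|V(\GA)|$: the all--$A$ state circles coincide with the boundaries of the $|V(\GA)|$ white regions of $F(D)$, which are precisely the $|V(\GA)|$ disjoint disks comprising $N(\emptyset)$.

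For the inductive step, adding an edge $e$ to $\G$ corresponds to flipping the associated crossing $c$ from the $A$- to the $B$-resolution. The local picture at $c$ in $F(D)$ shows that the two arcs of the $A$-resolution near $c$ lie on the same state circle of $s_\sigma$ if and only if the two feet of the to-be-attached ribbon along $e$ lie on the same boundary circle of $N(\G)$. In the ``same'' case, both operations split one circle into two; in the ``different'' case, both merge two circles into one. Hence $|s_{\sigma'}| = f(\G \cup \{e\})$ whenever $|s_\sigma| = f(\G)$, closing the induction. Substituting $a(\sigma) - b(\sigma) = e(\GA) - 2e(\G)$ and $|s_\sigma| = f(\G)$ into the Kauffman sum yields the claimed identity. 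The main obstacle is precisely the identification $|s_\sigma| = f(\G)$: one must correctly interpret $f(\G)$ via the ribbon embedding of $\GA$ in $F(D)$, and then carefully match the saddle swap in the state sum with the band attachment in the ribbon neighborhood at each crossing.
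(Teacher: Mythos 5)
Your overall route is the right one, and it is essentially the argument of the cited paper \cite{dasbach-futer...} (this survey only quotes the theorem): reindex the Kauffman state sum by spanning subgraphs of $\GA$ via ``edges of $\G$ $=$ crossings given the $B$--resolution,'' note $a(\sigma)-b(\sigma)=e(\GA)-2e(\G)$, and reduce everything to the identity $|s_\sigma|=f(\G)$, where $f(\G)$ must be interpreted as the number of boundary circles of the ribbon thickening of $\G$ inside the Turaev surface $F(D)$. Your insistence on that ribbon interpretation of $f(\G)$ is exactly right; a planar or complementary-region count would be wrong already for $\G=\emptyset$.

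The one place you need to tighten is the inductive step. As written, your induction hypothesis is only the numerical equality $|s_\sigma|=f(\G)$, and from that you cannot deduce the claim you use, namely that the two arcs of $s_\sigma$ at the crossing $c$ lie on the same state circle if and only if the two feet of the new band lie on the same boundary circle of $N(\G)$: ``same circle or not'' is a global question, and no ``local picture at $c$'' decides it. The fix is standard and stays entirely within your framework: strengthen the induction hypothesis to the geometric statement that the union of state circles of $s_\sigma$ is isotopic in $F(D)$ to $\bdy N(\G)$, by an isotopy that is standard near every crossing (at a crossing not in $\G$ both curve systems follow the two $A$--smoothing arcs; at a crossing in $\G$ both run over the saddle, i.e.\ follow the $B$--smoothing arcs). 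With that hypothesis the ``same circle iff same circle'' equivalence is immediate, the $\pm 1$ bookkeeping works as you describe (no M\"obius-type exceptional case arises, since the state circles live in $S^2$ and the bands in the orientable surface $F(D)$), and the count equality is a corollary. In fact, once you formulate the statement as an isotopy of curve systems you can dispense with induction altogether and verify it by a single crossing-by-crossing local isotopy; either way, with this strengthening your proof is complete and matches the known one.
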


Recall that given a diagram $D$, the Jones polynomial $J_K(t)$ is
obtained from the Kauffman bracket as follows. Multiply $\langle D
\rangle$ by with $(-A)^{-3w(D)}$, where $w(D)$ is the \emph{writhe} of
$D$, and then substitute $A = t^{-1/4}$.

\begin{remark} Theorem \ref{JPgraph}  leads to formulae for the coefficients 
of the Jones polynomial of a link in terms of topological quantities
of the graph $\GA$ corresponding to any diagram of the link
\cite{dasbach-futer..., dfkls:determinant}. These formulae become
particularly effective if $\GA$ corresponds to an $A$--adequate
diagram; in particular, Theorem \ref{thm:stabilized} below can be
recovered from these formulae.
\end{remark}

The polynomial $J_K(t)$ fits within a family of knot polynomials  known as the \emph {colored Jones polynomials}. 
A convenient way to
express this family is in terms of \emph{Chebyshev polynomials}. For $n \geq 0$,
the polynomial $S_n(x)$ is defined recursively as follows:
\begin{equation}\label{eq:cheb-recursive}
S_{n+1} = x S_n - S_{n-1}, \qquad S_1(x) = x, \qquad S_0(x) = 1.
\end{equation}

Let $D$ be a diagram of a link $K$. For an integer $m > 0$, let $D^m$
denote the diagram obtained from $D$ by taking $m$ parallel copies of
$K$.  This is the $m$--cable of $D$ using the blackboard framing; if
$m=1$ then $D^1=D$.  Let $\langle D^m \rangle$ denote the Kauffman
bracket of $D^m$ and let  $w=w(D)$ denote the writhe of
$D$. Then we may define the function
\begin{equation*}\label{eq:unreduced}
  G(n+1, A):= \left((-1)^n A^{n^2+2n} \right)^{-w} (-1)^{n-1}
  \left(\frac{A^4 - A^{-4}}{A^{2n} - A^{-2n}} \right) \langle S_n( D)
  \rangle,
\end{equation*}
where $S_n(D)$ is a linear combination of blackboard cablings of $D$,
obtained via equation (\ref{eq:cheb-recursive}), and the notation
$\langle S_n(D) \rangle$ means extend the Kaufmann bracket linearly.
That is, for diagrams $D_1$ and $D_2$ and scalars $a_1$ and $b_1$,
$\langle a_1 D_1 + a_2 D_2 \rangle = a_1 \langle D_1\rangle +
a_2\langle D_2\rangle$. Finally, the reduced $n$-th colored Jones polynomial of $K$, denoted
$$J^n_K(t)= \alpha_n t^{j(n)}+ \beta_n t^{j(n)-1}+ \ldots + \beta'_n
t^{j'(n)+1}+ \alpha'_n t^{j'(n)},$$
is obtained from $G(n, A)$ by substituting
$t:=A^{-4}$.  

For a given a diagram $D$ of $K$, there is
a lower bound for $j'(n)$ in terms of data about the state graph
$\GA$, and this bound is sharp when $D$ is $A$--adequate.  Similarly,
there is an upper bound on $j(n)$ in terms of $\GB$ that is realized
when $D$ is $B$--adequate \cite{lickorish:book}. See Theorem \ref{thm:slopes} for a related statement.
In addition, Dasbach and Lin showed that for $A$-- and $B$--adequate diagrams, the extreme coefficients of $J^n_K(t)$ have a particularly nice form.

\begin{theorem}[\cite{dasbach-lin:head-tail}] \label{thm:stabilized}
If $D$ is an $A$--adequate diagram, then $\alpha'_n$ and $\beta'_n$ are independent of $n>1$. In particular, $\abs{\alpha'_n} = 1$ and $\abs{\beta'_n} = 1 - \chi(\GRA)$, where $\GRA$
  is the reduced graph.
  
Similarly, if $D$ is $B$--adequate, then $\abs{\alpha_n} = 1$ and $\abs{\beta_n} = 1 - \chi(\GRB)$.
\end{theorem}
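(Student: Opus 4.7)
The plan is to apply the spanning subgraph expansion of Theorem \ref{JPgraph} to the $n$-cabling $D^n$, then propagate the result through the Chebyshev recursion defining $S_n(D)$. Since $J^{n+1}_K(t)$ is determined by $\langle S_n(D) \rangle$ and the recursion expresses $S_n(D) = D^n + (\text{lower cablings})$, the first step is to show that the lowest-$t$-degree coefficients of $J^{n+1}_K(t)$ — equivalently, the highest-$A$-degree coefficients of $\langle S_n(D)\rangle$ after writhe adjustment — are contributed entirely by $\langle D^n \rangle$. The lower cablings $D^k$ with $k < n$ have strictly fewer crossings and their brackets carry strictly smaller extremal $A$-degrees after the normalization $(-1)^n A^{n^2+2n}$ is applied.

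Next, I would describe the all--$A$ state graph $\G_A(D^n)$ of the $n$-cabling. Each state circle of $s_A(D)$ gives rise to $n$ parallel copies in $s_A(D^n)$, and the $n \times n$ block of crossings replacing each crossing of $D$ contributes parallel edges between these copies. A direct verification shows that $D^n$ inherits $A$-adequacy from $D$: a new 1-edge loop could appear only at an original crossing that was already a loop in $\GA$, which is forbidden. Moreover, the reduced graph $\G'_A(D^n)$ is isomorphic to $\GRA$, so invariants such as $\chi(\GRA)$ are preserved under cabling.

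Applying Theorem \ref{JPgraph} to $\G_A(D^n)$, the top $A$-degree term of $\langle D^n \rangle$ comes uniquely from the empty spanning subgraph $\G = \emptyset$, giving a leading coefficient of absolute value $1$ — this is the standard Kauffman--Lickorish--Thistlethwaite argument that $A$-adequate diagrams realize the maximum breadth. The second-to-top coefficient, at $A$-degree four less, receives contributions from $\G = \emptyset$ via the binomial expansion of $(-A^2-A^{-2})^{v-1}$, together with all single-edge spanning subgraphs. Because $A$-adequacy forbids 1-edge loops, each single-edge subgraph joins two distinct vertices; parallel duplicate edges contribute identical terms, so the total collapses to a count of edges of $\GRA$. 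Combining with the binomial contribution and simplifying, the count becomes $e(\GRA) - v(\GRA) + 1 = 1 - \chi(\GRA)$.

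The main obstacle will be this last combinatorial step: one must show that no spanning subgraph with two or more edges contributes to the second coefficient, and that the signs of the surviving terms do not cancel. This requires demonstrating that adding a further edge to $\G$ either decreases the face count $f(\G)$ or is absorbed into an existing cycle, in either case driving the $A$-degree strictly below the target threshold, except in the controlled cases above. Once the accounting is complete, converting back via $t = A^{-4}$ is mechanical, and the $n$-dependent factors from the cabling exactly cancel those from the Chebyshev normalization, yielding $n$-independent values for $\alpha'_n$ and $\beta'_n$. The $B$-adequate statement then follows by applying the $A$-adequate result to the mirror image of $K$, which exchanges $\GA$ with $\GB$.
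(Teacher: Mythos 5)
The paper does not actually prove this statement; it is quoted from Dasbach--Lin, with the remark that it can be recovered from the ribbon-graph formula of Theorem \ref{JPgraph}. Your route (reduce to the cable diagram $D^n$ via the Chebyshev expansion, then run the adequate-diagram extreme-coefficient count on $D^n$) is the standard one, so the question is whether your execution is complete. It is not: the pivotal claim that ``the reduced graph $\mathbb{G}'_A(D^n)$ is isomorphic to $\GRA$'' is false for $n\geq 2$. It is true that $s_A(D^n)$ consists of $n$ parallel copies of each circle of $s_A(D)$ (no extra circles are created in the all--$A$ smoothing of a cabled crossing block), so $\mathbb{G}_A(D^n)$ has $n\,v(\GA)$ vertices, not $v(\GA)$; moreover each $n\times n$ block contributes not only edges joining copies of the two distinct circles met by the original crossing, but also ``ladder'' edges joining consecutive parallel copies of the \emph{same} circle (already for $n=2$, one of the four crossings in a block joins the two copies of one circle, one joins the two copies of the other, and only the remaining two join a copy of one to a copy of the other). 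Consequently the single-edge subgraph count in your last step produces $1-\chi\bigl(\mathbb{G}'_A(D^n)\bigr)$, not $1-\chi(\GRA)$, and your claim that ``the count becomes $e(\GRA)-v(\GRA)+1$'' does not follow as written.

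What rescues the theorem, and what your proof must actually establish, is that $1-\chi\bigl(\mathbb{G}'_A(D^n)\bigr)=1-\chi(\GRA)$ for all $n$: the cabled reduced graph is $\GRA$ with each vertex blown up into a path of $n$ vertices (the ladder edges, which reduce to $n-1$ edges per original state circle no matter how many crossings abut it), together with exactly one reduced edge for each edge of $\GRA$ (here one needs that all parallel edges of $\GA$ between a pair $C,C'$ attach to the same extreme copies, which holds because $C'$ lies entirely on one side of $C$ in the plane). This gives $v=n\,v(\GA)$, $e=(n-1)v(\GA)+e(\GRA)$, hence the Euler characteristic is preserved; the graph is homotopy equivalent to $\GRA$ rather than isomorphic to it. This bookkeeping is precisely the nontrivial content of the $n$-independence of $\beta'_n$, so omitting it (and replacing it with a false isomorphism) is a genuine gap. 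Two smaller points: your adequacy-of-$D^n$ argument should be phrased using the correct cabled graph (ladder edges join distinct copies and middle edges join copies of distinct circles, so no 1--edge loops arise -- the conclusion is right but your justification presumes the wrong graph structure); and to protect the \emph{penultimate} coefficient you need the degree gap between $\langle D^n\rangle$ and $\langle D^{n-2}\rangle$ to exceed $4$, not merely to be positive -- it equals $4(n-1)c(D)+4v(\GA)$, so this is fine but should be said.
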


 Now the following definition makes sense in the light of Theorem
\ref{thm:stabilized}.

\begin{define}\label{defi:stable}
For an $A$--adequate link $K$,  we define the \emph{stable penultimate
  coefficient} of $J^n_K(t)$ to be $\beta'_K:=\abs{\beta'_n}$,
for $n>1$.

Similarly, for a $B$--adequate link $K$, we define the \emph{stable
  second coefficient} of $J^n_K(t)$ to be
$\beta_K:=\abs{\beta_n}$, for $n>1$.
\end{define}

For example, in Figure \ref{fig:state-graph}, $\GRA$ is a tree. Thus,
for the link in the figure, $\beta'_K = 0$.

\begin{remark}
It is known that in general, the colored Jones polynomials $J^n_K(t)$
satisfy linear recursive relations in $n$ \cite{
 garoufalidisLe}.  In this setting, the properties stated in Theorem
\ref{thm:stabilized} can be thought of as strong manifestations of the
general recursive phenomena, under the hypothesis of adequacy.
For arbitrary knots the coefficients $\abs{\beta_n}$, $\abs{\beta'_n}$
do not, in general, stabilize. For example, for $q > p >2$,
the  coefficients $\abs{\beta_n}$, $\abs{\beta'_n}$ of the  $(p,q)$ torus link
are periodic with period 2 (see \cite{codyoliver}):  
$$\abs{\beta_{n}} \: = \: \abs{\beta_{n+2k}} \qquad \mbox{and} \qquad 
\abs{\beta'_{n}}\: = \: \abs{\beta'_{n+2k}}, \qquad \mbox{for } n \geq 2, k \in \NN.$$
See  of
\cite[Chapter 10]{fkp:gutsjp} for more discussion and questions on these periodicity phenomena.
\end{remark}

\section{State surfaces}\label{sec:2D}

In this section, we consider $2$--dimensional objects: namely, certain surfaces associated to Kauffman states. This surface is constructed as follows. Recall that a Kauffman state $\sigma$ gives rise to a collection of circles embedded in the projection plane $S^2$. Each of these circles bounds a disk in the ball \emph{below} the projection plane, where the collection of disks is unique up to isotopy in the ball.
Now, at each crossing of $D$, we connect the pair of neighboring
disks by a half-twisted band to construct a \emph{state surface} $S_\sigma \subset
S^3$ whose boundary is $K$. See Figure \ref{fig:statesurface} for an
example where $\sigma$ is the all--$A$ state.

\begin{figure}
\includegraphics{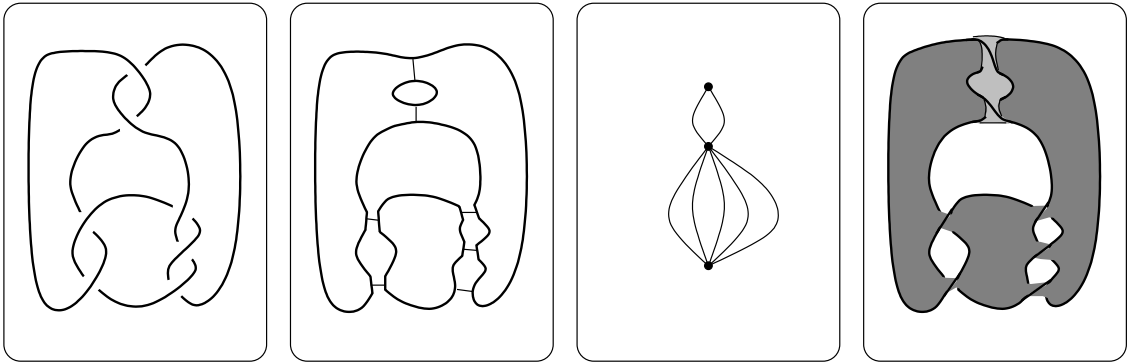}
\caption{Left to right:  A diagram.  The graphs $H_A$ and $\GA$.  The state
	surface $S_A$.}
\label{fig:statesurface}
\end{figure}

Well--known examples of state surfaces include Seifert surfaces (where
the corresponding state $\sigma$ is defined by following an
orientation on $K$) and checkerboard surfaces for alternating links
(where the corresponding state $\sigma$ is either the all--$A$ or
all--$B$ state). In this paper, we focus on the all--$A$ and all--$B$
states of a diagram, but we do not require our diagrams to be
alternating. 

Our surfaces also generalize checkerboard surfaces in the following sense. For an alternating diagram $D$, the white and shaded checkerboard surfaces are $S_A$ and $S_B$ of the all--$A$ and all--$B$ states. These surfaces can be simultaneously embedded in $S^3$ so that their intersection consists of disjoint segments, one at each crossing. Collapsing each of these segments to a point will map $S_A \cup S_B$ to the projection sphere, which is the Turaev surface $F(D)$ associated to an alternating diagram.

In our more general setting, suppose that we modify the surfaces $S_A$ and $S_B$ so that $S_A$ is constructed out of disks in the $3$--ball \emph{above} the projection plane, while $S_B$ is constructed out of disks in the $3$--ball \emph{below} the projection plane. (See Figure \ref{fig:saddle} for the boundaries of these disks.) Then, once again, $S_A \cap S_B$ will consist of disjoint segments at the crossings, and collapsing each segment to a point will map $S_A \cup S_B$ to the Turaev surface $F(D)$.  Informally, each of $S_A$ and $S_B$ forms ``half'' of the Turaev surface, just as each checkerboard surface of an alternating diagram forms ``half'' of the projection plane.

\smallskip

In general, the graph $\G_\sigma$ has the following relationship to the state surface
$S_\sigma$.

\begin{lemma}\label{lemma:ga-spine}
The graph $\mathbb{G}_\sigma$ is a spine for the surface $S_\sigma$.
\end{lemma}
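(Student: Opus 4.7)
The plan is to exhibit $\G_\sigma$ as an explicit sub-complex of $S_\sigma$ onto which the surface deformation retracts. The argument will rely on the natural disk-and-band decomposition of $S_\sigma$: the surface is built from a disjoint collection of disks $\{D_i\}$, one bounded by each state circle of $\sigma$, together with half-twisted bands $\{B_j\}$, one for each crossing of $D$, each band attached to its neighboring disks along a pair of arcs in the disk boundaries. This is precisely a $0$-handle/$1$-handle presentation of $S_\sigma$.

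First I will embed a copy of $\G_\sigma$ inside $S_\sigma$. For each state circle I pick an interior basepoint $v_i \in D_i$, and for each crossing I build an arc $e_j$ by concatenating an arc inside $D_{i_1}$ from $v_{i_1}$ to the midpoint of the attaching arc, the core $[0,1] \times \{1/2\}$ of the band $B_j$, and an arc inside $D_{i_2}$ from the midpoint of the second attaching arc back to $v_{i_2}$ (where possibly $i_1 = i_2$). Within each disk, the star of arcs from $v_i$ to its several attaching midpoints can be chosen disjoint away from $v_i$, so the union $G = \bigcup v_i \,\cup\, \bigcup e_j$ is a graph in $S_\sigma$ combinatorially isomorphic to $\G_\sigma$; multiple crossings between the same pair of state circles produce multi-edges, and crossings flanked on both sides by one state circle produce loops, exactly as in the construction of $\G_\sigma$.

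It then remains to produce a deformation retraction of $S_\sigma$ onto $G$. Each band $B_j \cong [0,1] \times [0,1]$ deformation retracts onto its core; a half-twist is merely a self-homeomorphism of the rectangle and plays no role here. Simultaneously, each disk $D_i$ deformation retracts onto its internal star, and these retractions agree on the attaching arcs where disks meet bands, so they glue to a global deformation retraction of $S_\sigma$ onto $G \cong \G_\sigma$. The statement is a routine consequence of the disk-and-band presentation, with no substantial obstacle beyond matching the combinatorial features of $\G_\sigma$ to the geometric features of the disks and bands, which has already been done in the embedding step.
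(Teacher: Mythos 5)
Your proof is correct and follows essentially the same route as the paper: embed a vertex in each disk and run an edge through each half-twisted band, then observe that the disk-and-band structure retracts onto this copy of $\G_\sigma$. The paper states this more briefly, and your added detail (the explicit star-plus-band-core retraction) only fleshes out the same construction.
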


\begin{proof}
By construction, $\mathbb{G}_\sigma$ has one vertex for every circe of
$s_\sigma$ (hence every disk in $S_\sigma$), and one edge for every
half--twisted band in $S_\sigma$. This gives a natural embedding of
$\mathbb{G}_\sigma$ into the surface, where every vertex is embedded
into the corresponding disk, and every edge runs through the
corresponding half-twisted band. This gives a spine for $S_\sigma$.
\end{proof}

The surfaces $S_{\sigma}$ are, in general, non-orientable (checkerboard surfaces already exhibit this phenomenon). The state graph $\mathbb{G}_\sigma$ encodes orientability via the following criterion, whose proof we leave as a pleasant exercise.

\begin{lemma}
\label{lemma:sa-orientable}
The surface $S_\sigma$ is orientable if and only if
$\mathbb{G}_\sigma$ is bipartite.
\end{lemma}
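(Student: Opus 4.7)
The plan is to reduce orientability of $S_\sigma$ to a $2$-coloring problem on the vertices of $\mathbb{G}_\sigma$, and then to recognize that problem as a bipartition.

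Recall from the construction that $S_\sigma$ is assembled from one disk per state circle and one half-twisted band per crossing. Each disk sits inside the $3$-ball on one side of the projection plane and is independently orientable; fix on each disk the canonical orientation given by the upward-pointing normal. To orient $S_\sigma$ globally, one may either keep or reverse this canonical orientation on each disk, subject to a compatibility condition at each band.

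The key local observation is that at every crossing, the half-twisted band together with its two attached disks forms a M\"obius strip: a once-twisted rectangular band with disks glued to its two boundary arcs. Consequently, the canonical ``both up'' orientations on the two adjacent disks fail to extend across the band, whereas reversing exactly one of the two disks allows a compatible extension. Therefore an orientation of $S_\sigma$ exists if and only if one can assign a label ``up'' or ``down'' to each state circle so that every crossing joins an ``up'' disk to a ``down'' disk. Under the identification of state circles with vertices of $\mathbb{G}_\sigma$ from Lemma \ref{lemma:ga-spine}, this is exactly a $2$-coloring of the vertex set in which every edge is bichromatic, i.e., a bipartition. A bipartition exists if and only if $\mathbb{G}_\sigma$ has no odd cycles, which is the standard definition of bipartiteness.

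The only step that requires genuine verification is the local M\"obius claim. The potential obstacle is to confirm that each band contributes an \emph{odd} number of half-twists (so that it is orientation-reversing), rather than, say, a full twist that could be isotoped away. This is immediate from the definition of the state surface, in which the band at each crossing is literally one half-twisted rectangle; hence this step poses no real obstacle, and the remaining argument is purely formal.
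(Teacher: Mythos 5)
Your overall strategy---orient each disk of $S_\sigma$ up to a sign, record the signs as a $2$-coloring of the vertices of $\mathbb{G}_\sigma$, and translate the compatibility condition at the crossings into properness of the coloring---is exactly the intended route; the paper gives no proof of this lemma (it is left as ``a pleasant exercise''), and for checkerboard surfaces of alternating diagrams your argument is the classical one. Two points in your write-up are nonetheless incorrect as stated. First, the assertion that ``the half-twisted band together with its two attached disks forms a M\"obius strip'' is false: two \emph{distinct} disks joined by a single band along two boundary arcs form a disk, hence an orientable surface, regardless of the twisting; a M\"obius band only arises when a band returns to the same disk (a $1$-edge loop) or around an odd cycle. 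What your argument actually needs is the weaker claim that the ``both up'' choice fails to extend across each half-twisted band, and that claim requires its own verification.

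Second, and more seriously, that claim is not true at every crossing. The disks of $S_\sigma$ are not all flat: when one state circle lies inside another, the corresponding disks are nested ``soup cans'' (see Section \ref{sec:polyhedra}), and the only sensible ``upward'' co-orientation of such a disk is its concave side, which along the vertical wall near the boundary points horizontally toward the interior of the state circle. A crossing can join a circle $C$ to a circle $C'$ with $C$ nested inside $C'$ (poke a kink inward, for example); the band then lies inside $C'$ and outside $C$, and transporting the upward normal of $C$'s disk across an \emph{untwisted} band wraps it over the rim of $C'$'s can onto the \emph{convex} side. So in the nested configuration it is the untwisted band that is incompatible, and the half-twisted band is in fact \emph{compatible} with the ``both up'' choice; your key local observation holds only at crossings joining un-nested circles. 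The lemma survives because the defect is controlled by nesting depth: across a crossing the depth of the state circle changes by $\pm 1$ exactly in the nested case and by $0$ otherwise, so around any cycle of $\mathbb{G}_\sigma$ the nested crossings occur an even number of times. Concretely, you can repair the proof by co-orienting the disk of each state circle by its concave-side normal multiplied by $(-1)^{\mathrm{depth}}$; with that corrected choice every half-twisted band is genuinely orientation-incompatible, and your $2$-coloring argument then yields orientability if and only if $\mathbb{G}_\sigma$ is bipartite.
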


We need the following definition.

\begin{define}\label{def:essential}
Let $M$ be an orientable $3$--manifold and $S \subset M$ a properly embedded surface. 
We say that $S$ is \emph{essential}\index{essential surface} in $M$ if the boundary of a regular neighborhood of $S$, denoted $\widetilde{S}$, 
 is incompressible and boundary--incompressible. If $S$ is orientable, then $\widetilde{S}$ consists of two copies of $S$, and the definition is equivalent to the standard notion of ``incompressible and boundary--incompressible.''
 If $S$ is non-orientable, this is equivalent to $\pi_1$--injectivity of $S$, the stronger of two possible senses of incompressibility.

The state surfaces surface $S_{\sigma}$ are often non-orientable. In this case, $S^3 \cut \widetilde{S_{\sigma}}$ is the disjoint union of $M_A = S^3 \cut S_{\sigma}$ and a twisted $I$--bundle over $S_{\sigma}$.
\end{define}

Again, we are especially interested in the state surfaces of the all--$A$ and all--$B$ states. 
For these states, there is a particularly nice relationship between the state
surface $S_\sigma$ and the state graph $\mathbb{G}_\sigma$.

\begin{theorem}[Ozawa \cite{ozawa}] \label{thm:incompress} 
Let $D(K)$ be a diagram of a link $K$, and let $\sigma$ be the all--$A$ or all--$B$ state.  Then the  state 
surface $S_\sigma$ is essential in $S^3
\setminus K$ if and only if $\mathbb{G}_\sigma$ contains no 1--edge loops.
\end{theorem}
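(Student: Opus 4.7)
The proof splits into two implications. For the easy direction, suppose $\mathbb{G}_\sigma$ contains a $1$--edge loop based at a vertex $v$. By construction, this corresponds to a crossing $c$ of $D(K)$ whose two $A$--resolution (resp.\ $B$--resolution) arcs both lie on the same state circle $C_v$. Consequently, the half-twisted band $B_c$ of $S_\sigma$ attaches to the state disk $D_v$ along two disjoint arcs of $\partial D_v$, so the subsurface $D_v \cup B_c \subset S_\sigma$ is either an embedded annulus or an embedded M\"{o}bius band. Inspecting the local picture at $c$ produces a simple closed curve on $\widetilde{S_\sigma}$ that links the band once (hence is essential on $\widetilde{S_\sigma}$) and bounds an embedded disk in $S^3 \setminus K$, which can be placed in the half-ball on the side of $D_v$ opposite $B_c$. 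This compressing or boundary--compressing disk for $\widetilde{S_\sigma}$ shows that $S_\sigma$ is not essential.

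For the converse, assume $\mathbb{G}_\sigma$ has no $1$--edge loops, and suppose toward a contradiction that $\widetilde{S_\sigma}$ admits an essential disk $E \subset S^3 \setminus K$. Focusing on the case where $E$ lies in $M_A = S^3 \cut S_\sigma$ (the case where $E$ lies in the twisted $I$--bundle over $S_\sigma$ is handled by an analogous, simpler argument), the natural tool is the polyhedral decomposition of $M_A$ outlined in Section \ref{sec:polyhedra}. This realizes $M_A$ as a union of checkerboard--colored ideal polyhedra, whose shaded faces sit in $S_\sigma$, whose white faces are bounded by state circles, and whose ideal edges run along $K$. The plan is to put $E$ in normal form with respect to this decomposition, so that it meets each polyhedron in a disjoint union of normal disks whose boundaries are simple normal closed curves on the $2$--skeleton.

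Each such normal boundary curve traces a cyclic sequence of shaded and white faces and hence records a closed edge--path in $\mathbb{G}_\sigma$. After standard normalization moves (removing trivial face intersections, eliminating bigons between $\partial E$ and edges of the polyhedra, and isotoping arcs out of shaded faces) one reduces to the case where some innermost normal curve has an associated edge--path of length one or two. The length--one case is precisely a $1$--edge loop in $\mathbb{G}_\sigma$ and is ruled out by hypothesis; the length--two case is to be excluded using the specific way shaded faces glue along state circles in each polyhedron. Once both are eliminated, every innermost normal disk lies trivially in a shaded face, contradicting the essentiality of $E$.

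The main obstacle is this final combinatorial step: converting the condition ``no $1$--edge loops in $\mathbb{G}_\sigma$'' into a clean geometric statement that rules out short normal curves on the polyhedra. This relies on fine structural features of the decomposition --- in particular, that the white faces are simply connected and that every essential arc of $E$ in a shaded face must run between distinct state circles --- which are exactly the properties the construction in Section \ref{sec:polyhedra} is designed to provide.
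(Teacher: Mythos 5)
Your overall strategy for the hard direction (normalize a compressing disk with respect to the polyhedral decomposition of Section~\ref{sec:polyhedra}) is the route the authors take in \cite{fkp:gutsjp} (Ozawa's own proof is different, via Murasugi sums), but as written the proof has a genuine gap precisely at the step you yourself defer as ``the main obstacle,'' and moreover the plan for that step is not the one that works. First, the hypothesis that $\mathbb{G}_\sigma$ has no $1$--edge loops is not used at the end to rule out ``short normal curves'': it is used at the very beginning, to prove that the shaded faces of $M_A \cut \CalD$ are simply connected, i.e.\ that the pieces are honest ideal polyhedra (Theorem~\ref{thm:3-ball}); without adequacy there is no polyhedral decomposition to normalize against, so your argument cannot even get started in the non-adequate setting you are implicitly comparing to. Second, the contradiction is not obtained by classifying normal curves by ``edge--path length one or two'' in $\mathbb{G}_\sigma$: a normal curve lives in a single polyhedron, whereas an edge of $\mathbb{G}_\sigma$ records a crossing of the diagram, and your claim that the length--one case ``is precisely a $1$--edge loop'' is neither established nor true in general. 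The actual mechanism is: intersect the compressing disk with the white faces, take an outermost arc, and observe that it cuts off a normal bigon inside one polyhedron, which is impossible because the polyhedra are \emph{prime} (Definition~\ref{def:prime-poly}). Crucially, primeness does not hold automatically; it is achieved only after further cutting along a maximal collection of non-prime arcs as in Section~\ref{subsec:primeness}, a step your proposal never mentions. Finally, ``essential'' here means that $\widetilde{S_\sigma}$ is both incompressible and boundary--incompressible; your normalization discussion treats only compressing disks, and boundary compressions (as well as the $I$--bundle side of $S^3\cut\widetilde{S_\sigma}$, which you wave off as ``analogous'') need their own argument.

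The easy direction is also shakier than you present it. It is true that a $1$--edge loop makes $D_v\cup B_c$ an annulus or M\"obius band, but the curve you describe need not bound a disk ``in the half-ball on the side of $D_v$ opposite $B_c$'': below the projection plane the disks are nested soup cans, and above it lie the overstrand at $c$ and all the other bands attached to the same state circle, any of which can obstruct the disk you posit; nor is it automatic that your curve is essential on $\widetilde{S_\sigma}$ (``links the band once'' is not an argument). A correct proof of this direction produces an explicit compression or boundary--compression of $\widetilde{S_\sigma}$ from the local picture at the crossing, together with a verification that the curve (or arc) is essential on $\widetilde{S_\sigma}$; this is short but does require the care that is missing here. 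In summary: the easy direction needs its disk and its essentiality justified, and the hard direction is missing its core ingredients --- simple connectivity of the shaded faces (where adequacy is used) and primeness after cutting along non-prime arcs (which excludes normal bigons) --- so the proposal does not yet constitute a proof.
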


In fact, Ozawa's theorem also applies to a number of other states, which he calls $\sigma$--homogeneous \cite{ozawa}.

Ozawa proves Theorem \ref{thm:incompress} by decomposing the diagram
into tangles so that $S_\sigma$ is a Murasugi sum.  We have an alternate
proof of this result in \cite{fkp:gutsjp} that uses a decomposition of
the complement of $S_\sigma$ into topological balls.  We will discuss this
more in Section \ref{sec:polyhedra}.

\subsection{Colored Jones polynomials and slopes of state surfaces}

Garoufalidis has conjectured that for a knot $K$, the growth of the degree
of the colored Jones polynomial is related to essential surfaces in
the manifold $S^3\setminus K$ \cite{garoufalidis:jones-slopes}.  In
\cite{fkp:PAMS}, we show that this holds for $A$--adequate diagrams of
a knot $K$ and the essential surface $S_A$.  In this subsection, we
review these results.

Given $K \subset S^3$, let $M= M_K$ denote the compact 3--manifold
created when a tubular neighborhood of $K$ is removed from $S^3$.  There is a
canonical meridian--longitude basis of $H_1 (\bdy M)$, which we denote
by $\langle \mu, \lambda \rangle$.  Any properly embedded surface $(S,
\bdy S) \subset (M, \bdy M)$ has $S \cap \bdy M$ a simple closed curve
on $\bdy M$.  The homology class of $\bdy S$ in $H_1(\bdy M)$ is
determined by an element $p/q \in \QQ \cup \{ 1/0 \}$: the
\emph{slope} of $S$.  An element $p/q \in \QQ \cup \{ 1/0 \}$ is
called a \emph{boundary slope} of $K$ if there is a properly embedded
essential surface $(S, \bdy S) \subset (M, \bdy M)$, such that $\bdy
S$ is homologous to $p \mu + q \lambda \in H_1 (\bdy M)$.  Hatcher has
shown that every knot $K \subset S^3$ has finitely many boundary
slopes \cite{hatcher}.

Let $j(n)$ denote the highest degree of $J_K(n, t)$ in $t$, and let
$j'(n)$ denote the lowest degree.  Consider the sequences
$$js_K:= \left\{{{ 4j(n)}\over {n^2} } \: : \: n > 0\right\} \quad
\mbox{and} \quad js'_K:= \left\{ {{ 4j'(n)}\over {n^2}} \: : \: n > 0
\right\}.$$

Garoufalidis has conjectured \cite{garoufalidis:jones-slopes}  that for
each knot $K$, every cluster point (i.e., every limit of a
subsequence) of $js_K$ or $js^*_K$ is a boundary slope of $K$.  In
\cite{fkp:PAMS}, the authors proved this is true for $A$--adequate
knots, and the boundary slope comes from the incompressible surface
$S_A$.  This is the content of the following theorem.

\begin{figure}[h]
\includegraphics{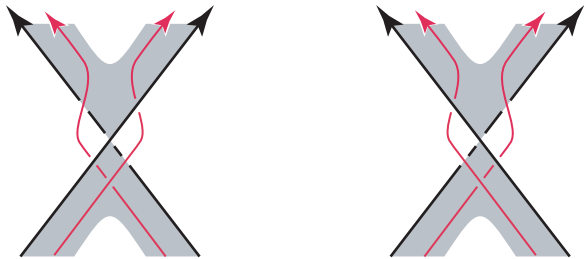}
\caption{Left: a positive crossing, and a piece of $S_B$ near the crossing.  Locally,
this crossing contributes $+2$ to the slope of $S_B$, and makes no contribution to the slope of $S_A$. Right: a negative crossing contributes $-2$ to the slope of $S_A$, and makes no contribution to the slope of $S_B$.
}
\label{fig:local-slope}
\end{figure}

\begin{theorem}[\cite{fkp:PAMS}]
\label{thm:slopes}
Let $D$ be an $A$--adequate diagram of a knot $K$ and let $b(S_A) \in
\ZZ$ denote the boundary slope of the essential surface $S_A$.  Then
$$\lim_{n\to \infty} {\frac {4 j'(n)}{n^2}}= b(S_A)=-2c_-,$$ where
$c_-$ is the number of negative crossings in $D$. (See Figure \ref{fig:local-slope}, right.)

Similarly, if $D$ is a $B$--adequate diagram of a knot $K$, let
$b(S_B) \in {\ZZ}$ denote the boundary slope of the essential surface
$S_B$.  Then
$$\lim_{n\to \infty} {\frac {4 j(n)}{n^2}}= b(S_B)=-2c_+,$$
where $c_+$ is the number of
positive crossings in $D$.  (See Figure \ref{fig:local-slope}, left.)
\end{theorem}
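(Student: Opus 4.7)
The theorem has two independent contents: (a) the identity $b(S_A) = -2c_-$ for the boundary slope of the essential surface $S_A$, and (b) the convergence $4j'(n)/n^2 \to b(S_A)$ as $n \to \infty$. The $B$-adequate statement follows from the $A$-adequate statement by passing to the mirror image, which exchanges $S_A$ with $S_B$, swaps $c_+$ with $c_-$, and converts the minimum degree $j'(n)$ of $J^n_K(t)$ into the negative of the maximum degree $j(n)$ for the mirror. It therefore suffices to prove the $A$-adequate assertion.

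For (a), the spanning surface $S_A$ has a single boundary curve on $\partial N(K)$, whose homology class equals $b(S_A)\mu + \lambda$ in $H_1(\partial N(K))$. Equivalently, $b(S_A)$ is the self-linking of $K$ in the framing determined by $S_A$, which I would compute from $D$ crossing by crossing. At each crossing, I compare the framing that the twisted band of $S_A$ gives along $K$ with the Seifert framing. A local inspection, summarized in Figure \ref{fig:local-slope}, shows that a positive crossing contributes $0$ to this comparison while a negative crossing contributes $-2$. Summing over all crossings yields $b(S_A) = -2c_-$.

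For (b), I would compute $j'(n)$ directly from the definition of $J^n_K(t)$ via the Kauffman bracket of Chebyshev combinations of blackboard cables of $D$. The key input is that the $n$-parallel cable $D^n$ of an $A$-adequate diagram is itself $A$-adequate: each crossing of $D$ becomes an $n \times n$ grid of crossings in $D^n$, and the no-$1$-edge-loop condition on $\GA$ is inherited by the all-$A$ state graph of the cable. For an $A$-adequate diagram, the minimum $A$-degree of the Kauffman bracket is attained without cancellation by the all-$A$ state, and the standard formula for this minimum degree, applied to $D^n$, grows quadratically in $n$. Combining this with the writhe normalization factor and the substitution $t = A^{-4}$ in the definition of $J^n_K(t)$ yields, after a direct computation,
\begin{equation*}
4 j'(n) = -2 c_- n^2 + O(n),
\end{equation*}
from which the stated limit is immediate upon dividing by $n^2$.

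The main obstacle is the quantitative part of step (b). The delicate issue is controlling the Chebyshev combination $S_n(D)$: the signed linear combination of cables $D^m$ with $m \le n$ could in principle produce cancellations among the lowest-degree terms of $\langle S_n(D) \rangle$. The key point, where $A$-adequacy is used essentially, is to show that the extremal contribution coming from the all-$A$ state of the top cable $D^n$ is not cancelled by contributions of lower cables $D^m$, so that the minimum degree of $\langle S_n(D) \rangle$ is genuinely attained. Once this non-cancellation is in hand, pinning down the leading $n^2$ coefficient reduces to careful bookkeeping involving $c(D^n) = n^2 c$, the growth rate of $|s_A(D^n)|$, and the writhe normalization.
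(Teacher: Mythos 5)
Your outline follows essentially the same route as the proof in \cite{fkp:PAMS}: the slope identity is obtained by the crossing-by-crossing comparison with the Seifert framing summarized in Figure \ref{fig:local-slope}, and the degree asymptotics come from the fact that cables $D^n$ of an $A$--adequate diagram are again $A$--adequate, so the all-$A$ state gives the exact extreme degree of $\langle D^n \rangle$ with coefficient $\pm 1$ and, by monotonicity of these extreme degrees in the cabling parameter, cannot be cancelled by the lower cables in the Chebyshev combination (this is precisely the sharpness statement for adequate diagrams cited to \cite{lickorish:book} in Section \ref{sec:1D}), after which the quadratic bookkeeping yields the limit $-2c_-$. Two small cautions: with the conventions used here the all-$A$ state controls the \emph{maximal} $A$--degree of the bracket, which corresponds to the minimal $t$--degree since $t = A^{-4}$ (you wrote minimal $A$--degree), and your mirror-image reduction gives $b(S_B) = +2c_+$, which agrees with the caption of Figure \ref{fig:local-slope} and with \cite{fkp:PAMS} rather than with the sign printed in the statement above.
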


Additional families of knots for which the conjecture is true are
given by Garoufalidis \cite{garoufalidis:jones-slopes} and more
recently by Dunfeld and Garoufalidis \cite{dunfield-garoufalidis:2fusion}.

\section{Cutting along the state surface}\label{sec:3D}

In this section, we focus on the  3--manifold formed by cutting along the state surface $S_A$. Using its $3$--dimensional structure, we will relate the hyperbolic geometry of $S^3 \setminus K$ to the Jones and colored Jones polynomials of $K$.

\subsection{Geometry and topology of the state surface complement}

\begin{define}\label{def:cut}
  Let $K\subset S^3$ be a link, and $S_A$ the all--$A$ state surface.
  We let $M$ denote the link complement, $M = S^3 \setminus K$, and we
  let $M_A := M\cut S_A$\index{$M\cut S_A$} denote the path--metric closure
  of $M \setminus S_A$.  Note that $M_A = (S^3\setminus K)\cut S_A$ is
  homeomorphic to $S^3\cut S_A$, obtained by removing a regular
  neighborhood of $S_A$ from $S^3$.

We will refer to $P=\bdy M_A \cap \bdy M$ as the \emph{parabolic
  locus}\index{parabolic locus} of $M_A$; it consists of annuli. The
remaining, non-parabolic boundary $\bdy M_A \setminus \bdy M$ is the
unit normal bundle of $S_A$.
\end{define}

Our goal is to use the state graph $\GA$ to understand the topological structure of $M_A$. One result along these lines is a straightforward characterization of when $S_A$ is a fiber surface for $S^3 \setminus K$, or equivalently when $M_A$ is an $I$--bundle over $S_A$.

\begin{theorem} \label{thm:fiber-tree}
Let $D(K)$ be any link diagram, and let $S_A$ be the spanning surface
determined by the all--$A$ state of this diagram. Then the following
are equivalent:
\begin{enumerate}
\item The reduced graph $\GRA$ is a tree.
\item  $S^3 \setminus K$ fibers over $S^1$, with fiber $S_A$.
\item $M_A = S^3 \cut S_A$ is an $I$--bundle over $S_A$.
\end{enumerate}
\end{theorem}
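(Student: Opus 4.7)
The plan is to prove the cycle of implications $(1)\Rightarrow(2)\Rightarrow(3)\Rightarrow(1)$, exploiting the fact that $(2)\Rightarrow(3)$ is immediate from the definition of a surface bundle over $S^1$.

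For $(1)\Rightarrow(2)$, I would induct on $e(\GRA)$. The base case $e(\GRA)=0$ gives $\GRA$ a single vertex, so $D$ is connected and crossing-free, $K$ is an unknot, $S_A$ is a disk, and $S^3\setminus K$ is a solid torus fibering trivially over $S^1$ with fiber $S_A$. For the inductive step, pick a leaf $v$ of $\GRA$ with unique incident edge $e$. This edge corresponds to $k\geq 1$ parallel edges in $\GA$, i.e., to a twist region of $k$ crossings in $D$. Removing the twist region produces a simpler diagram $D'$ whose reduced state graph is $\GRA\setminus\{v,e\}$, still a tree; by induction, the state surface $S_A'$ of $D'$ is a fiber surface. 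The original $S_A$ is then obtained from $S_A'$ by a Murasugi sum with a standard local fiber surface for the twist region---a disk when $k=1$, or an iterated plumbing of $k-1$ Hopf bands when $k\geq 2$---so Gabai's theorem that Murasugi sums of fiber surfaces remain fibered completes the induction. Note that orientability of $S_A$ (needed for the fibration) is automatic: $\GRA$ is a tree, hence bipartite, and duplicating edges preserves bipartiteness, so $\GA$ is bipartite and $S_A$ is orientable by Lemma~\ref{lemma:sa-orientable}.

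The implication $(2)\Rightarrow(3)$ is a definition check: cutting $S^3\setminus K$ along a fiber $S_A$ yields the product $S_A\times I$, which is tautologically an $I$-bundle over $S_A$.

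For $(3)\Rightarrow(1)$, I would invoke the polyhedral decomposition of $M_A$ described in Section~\ref{sec:polyhedra}. The combinatorics of this decomposition are governed directly by $\GA$, and a cycle in $\GRA$ gives rise to an essential product disk that cannot be absorbed into any $I$-bundle region; in the JSJ-style analysis developed in \cite{fkp:gutsjp}, this disk contributes a nonempty \emph{guts} component to the JSJ decomposition of $M_A$. Hence if $M_A$ is a pure $I$-bundle then $\GRA$ must be acyclic, and since $\GRA$ is connected (as $D$ is), it is a tree. This is the main obstacle of the proof: converting the absence of JSJ obstructions in $M_A$ into the combinatorial acyclicity of $\GRA$ requires the full polyhedral machinery of \cite{fkp:gutsjp}, whereas the other two implications only need induction plus Gabai's plumbing theorem.
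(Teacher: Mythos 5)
Your implication $(3)\Rightarrow(1)$ rests on a false mechanism, and this is where the genuine gap lies. You claim that a cycle in $\GRA$ produces an essential product disk that ``cannot be absorbed into any $I$--bundle region'' and hence forces nonempty guts. This is backwards: an EPD is itself an $I$--bundle over an interval (Definition \ref{def:epd}), and EPDs are exactly the objects that \emph{span} the $I$--bundle part of the JSJ decomposition (Theorem \ref{thm:epd-span}); they never create guts. Quantitatively, Theorem \ref{thm:guts-general} gives $\negeul(\guts(M_A)) = \negeul(\GRA) - ||E_c||$, so $\GRA$ having a cycle (i.e.\ $\negeul(\GRA)>0$) is perfectly compatible with empty guts whenever $||E_c||>0$; the paper's discussion of fibroids and the characterization of $\abs{\beta'_K}=1$ in \cite[Theorem 9.18]{fkp:gutsjp} show that ``$\guts(M_A)=\emptyset$'' is strictly weaker than ``$\GRA$ is a tree,'' hence strictly weaker than statement (3). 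What $(3)\Rightarrow(1)$ actually needs is the finer fact that an $I$--bundle structure on \emph{all} of $M_A$ over \emph{all} of $S_A$ is detected by the prime polyhedral decomposition, forcing every polyhedron to be a prism and every white face a bigon, whence $\GRA$ is a tree by Proposition \ref{prop:product-detect}; note also that Theorem \ref{thm:fiber-tree} is stated for arbitrary diagrams, while the guts formula you invoke requires $A$--adequacy. (Alternatively one can prove $(2)\Rightarrow(1)$ with Gabai's converse to the Murasugi sum theorem, in the spirit of your first step, but that argument must actually be carried out; ``cycle $\Rightarrow$ nonempty guts'' does not substitute for it.)

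Your $(1)\Rightarrow(2)$ is a genuinely different route from the paper's: the survey and \cite{fkp:gutsjp} deduce it from the polyhedral decomposition ($\GRA$ a tree makes every prime polyhedron a prism, and the product structures glue across bigon faces to give $M_A \cong S_A \times I$), whereas you induct on leaves of $\GRA$ and plumb Hopf bands, invoking Gabai; this is workable (it is essentially Futer's later ``fiber detection'' argument) but two points need repair. First, the $k$ parallel edges of $\GA$ at a leaf need \emph{not} form a twist region of $D$ --- the paper explicitly shows 2--edge loops whose edges lie in different twist regions (Figure \ref{fig:EPD}) --- although the local picture of two state circles joined by $k$ segments is still the fiber surface of a $(2,k)$ torus link, since disjoint arcs in the annulus between the circles are parallel there. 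Second, the assertion that $S_A$ is the Murasugi sum of $S_A(D')$ with this local fiber along a $2k$--gon is the real geometric content of the inductive step: one must produce the splitting sphere meeting $S_A$ only in that polygon, keeping the other tangles attached to the neighboring state circle (possibly interleaved between the $k$ segments) on the far side. As written, you assert this without proof, and the hard half of the theorem --- that fibering or an $I$--bundle structure forces the tree condition --- is not established.
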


For example, in the link diagram depicted in Figure \ref{fig:statesurface}, the graph $\GRA$ is a tree with two edges.  Thus the state surface $S_A$ shown in  Figure \ref{fig:statesurface} is a fiber in $S^3 \setminus K$.

In general, one may apply the annulus version of the JSJ
decomposition theory \cite{jaco-shalen, johannson} to cut $M_A$ into
three types of pieces: $I$--bundles over sub-surfaces of $S_A$,
Seifert fibered spaces that are solid tori, and \emph{guts}, i.e. the
portion that admits a hyperbolic metric. 

The pieces of the JSJ decomposition give significant information about
the manifold $M_A$.  For example, if $\guts(M_A) = \emptyset$, then
$M_A$ is a union of $I$--bundles and solid tori.  Such an $M_A$ is
called a \emph{book of $I$--bundles}, and the surface $S_A$ is called
a \emph{fibroid} \cite{culler-shalen}.

The guts of $M_A$ are a good measurement of topological complexity. 
To express this more precisely, we need the following
definition.  

\begin{define}\label{def:neg-euler}
  Let $Y$ be a compact cell complex with connected components $Y_1,
  \ldots, Y_n$.  Let $\chi(\cdot)$ denote the Euler characteristic.
  This can be split into positive and negative parts, with notation
  borrowed from the Thurston norm \cite{thurston:norm}:
$$
\chi_+(Y) = \sum_{i=1}^{n} \max \{  \chi(Y_i), \, 0 \}, \qquad 
\negeul(Y) = \sum_{i=1}^{n} \max \{ - \chi(Y_i), \, 0 \}.
$$
Note that $\chi(Y) = \chi_+ (Y) - \negeul(Y)$.  In the case that $Y =
\emptyset$, we have $\chi_+(\emptyset) = \negeul(\emptyset) = 0$.
\end{define}

The negative Euler characteristic $\negeul(\guts(M_A))$ serves as a
useful measurement of how far $S_A$ is from being a fiber or a fibroid
in $S^3 \setminus K$.  In addition, it relates to the volume of $M_A$.
The following theorem was proved by
Agol, Storm, and Thurston.

\begin{theorem}[Theorem 9.1 of \cite{ast}]\label{thm:ast-estimate}
Let $M$ be finite--volume hyperbolic $3$--manifold, and let $S \subset
M$ be a properly embedded essential surface.  Then
$$\vol(M) \:\geq \: v_8\, \negeul(\guts(M\cut S)),$$
where $v_8 = 3.6638...$ is the volume of a regular ideal octahedron.
\end{theorem}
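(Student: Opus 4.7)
The strategy is to locate a hyperbolic submanifold inside $M$ whose volume simultaneously bounds $\vol(M)$ from below and is itself controlled by the topology of the guts. The plan combines three deep ingredients: Thurston's hyperbolization theorem for Haken manifolds, a volume-comparison result of Agol--Storm--Thurston built on Perelman's Ricci flow, and Miyamoto's isoperimetric bound for hyperbolic $3$-manifolds with totally geodesic boundary.

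First I would reduce to the case $G := \guts(M\cut S) \neq \emptyset$, since otherwise $\negeul(\guts) = 0$ and the inequality is trivial. With the JSJ decomposition of $M\cut S$ in hand, $G$ is the union of pieces that are neither $I$-bundles over subsurfaces of $S$ nor Seifert-fibered solid tori. Its boundary splits into a ``guts surface'' part (meeting the $I$-bundle pieces along annuli, and otherwise comprised of subsurfaces of two copies of $S$) and a parabolic part consisting of annuli and tori inherited from $\partial M$. By Thurston's hyperbolization theorem for Haken manifolds, $G$ admits a unique complete hyperbolic metric of finite volume in which the guts surface is totally geodesic and the parabolic boundary becomes cusps.

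The main step, and the principal obstacle, is to establish the volume inequality $\vol(M) \geq \vol(G)$. The Agol--Storm--Thurston argument proves this by doubling: double $M$ along $S$ to obtain $DM$ with $2\vol(M) = \vol(DM)$, and double $G$ along its totally geodesic boundary to obtain a finite-volume hyperbolic manifold $DG$ with $2\vol(G) = \vol(DG)$. The doubled surface $DS$ is essential in $DM$, and inside the JSJ decomposition of $DM$ the manifold $DG$ appears as the union of all hyperbolic pieces. One then applies Perelman's monotonicity of volume under Ricci flow with surgery (together with the Hamilton--Perelman analysis of the geometric limit) to conclude $\vol(DM) \geq \vol(DG)$, whence $\vol(M) \geq \vol(G)$. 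Essentially all of the analytic and geometric depth of the theorem resides in this doubling-plus-Ricci-flow step; the remainder is topological.

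Finally, I would invoke Miyamoto's theorem: a complete, finite-volume hyperbolic $3$-manifold $N$ with nonempty totally geodesic boundary satisfies $\vol(N) \geq v_8\, \negeul(N)$, with equality iff $N$ is assembled from regular ideal octahedra along totally geodesic faces. Applying this to each connected component of $G$ and summing over components gives $\vol(G) \geq v_8\, \negeul(G)$. Chaining this with $\vol(M) \geq \vol(G)$ yields the desired bound $\vol(M) \geq v_8\, \negeul(\guts(M\cut S))$.
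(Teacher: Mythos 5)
This statement is quoted in the survey as Theorem 9.1 of Agol--Storm--Thurston and is not proved in the paper itself, so the only comparison available is with the original source. Your outline is correct and follows that original argument essentially verbatim: hyperbolize the guts with totally geodesic boundary, obtain $\vol(M)\geq\vol(\guts)$ via the doubling argument and Perelman's Ricci flow with surgery, and conclude with Miyamoto's bound $\vol(N)\geq v_8\,\negeul(N)$ for hyperbolic manifolds with geodesic boundary.
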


We apply Theorem \ref{thm:ast-estimate} to the essential surface $S_A$
for a prime, $A$--adequate diagram of a hyperbolic link.  In order to
do so, we develop techniques for determining the Euler characteristic
of the guts of $S_A$.  We find that it can be read off of a diagram of the
link.

\begin{theorem}\label{thm:guts-general}
Let $D(K)$ be an $A$--adequate diagram, and let $S_A$ be the essential
spanning surface determined by this diagram. Then
$$\negeul( \guts(S^3 \cut S_A))= \negeul (\GRA) - || E_c||,$$ where
$|| E_c|| \geq 0$ is a diagrammatic quantity.
\end{theorem}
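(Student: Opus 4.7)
The plan is to exploit the polyhedral decomposition of $M_A$ described in Section \ref{sec:polyhedra}, together with the theory of the characteristic submanifold, to compute $\negeul(\guts(M_A))$ combinatorially from the diagram. The key idea is to identify the I-bundle part of the JSJ decomposition of $M_A$ with a certain combinatorial data read off from $\GA$, and then to use Euler characteristic bookkeeping.

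First, I would characterize the maximal I-bundle piece of $M_A$ via \emph{essential product disks} (EPDs): properly embedded disks in $M_A$ whose boundary alternates between two arcs on the parabolic locus $P$ and two arcs on $\bdy M_A \setminus P \subset \widetilde{S_A}$. Standard characteristic submanifold theory ensures that the I-bundle piece of $M_A$ is spanned, up to isotopy, by a maximal collection of pairwise disjoint, non-parallel EPDs, while the Seifert-fibered solid-torus pieces are cut off by essential annuli. So the task reduces to enumerating EPDs up to parallelism.

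Second, I would put each EPD into normal form relative to the polyhedra. Each polyhedron has shaded faces lying on $S_A$ and white faces meeting $P$; a white face with exactly two edges (a \emph{bigon}) yields an EPD supported in a single polyhedron, which I call \emph{simple}. The simple EPDs correspond bijectively to the duplicate edges of $\GA$: whenever $k$ parallel edges join the same pair of state circles, the associated twist region contributes $k-1$ bigons, and the resulting EPDs cut off solid-torus Seifert pieces over that twist region. After absorbing these solid tori into the Seifert part, we obtain a reduced polyhedral decomposition whose shaded spine is precisely $\GRA$, and every surviving EPD crosses at least two reduced polyhedra; I call such EPDs \emph{complex}. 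I then define $\|E_c\|$ as the minimal number of complex EPDs in a maximal spanning collection for the remaining I-bundle part of $M_A$. Additivity of Euler characteristic across the essential JSJ annuli gives
\begin{equation*}
\chi(M_A) \;=\; \chi(B_I) + \chi(\guts(M_A)),
\end{equation*}
where $B_I$ is the base surface of the I-bundle. Combining this with $\chi(M_A) = \chi(S_A) = \chi(\GA)$, the identity $\chi(\GRA) - \chi(\GA)$ equaling the number of duplicate edges absorbed by simple EPDs, and the identification of the leftover I-bundle contribution with $\|E_c\|$, the formula $\negeul(\guts(M_A)) = \negeul(\GRA) - \|E_c\|$ falls out.

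The main obstacle will be establishing a workable normal form for EPDs and showing that $\|E_c\|$ is well-defined as a diagrammatic quantity, rather than depending on the choice of maximal collection. Concretely, I would need a cut-and-paste argument showing that any EPD can be normalized without spawning new complex components outside a controlled polyhedral neighborhood, and that some maximal collection realizing the I-bundle admits an explicit combinatorial description from $D(K)$. A secondary obstacle is ruling out essential annuli in $M_A$ not detected by the EPD count, which would artificially enlarge the Seifert part; here the hypotheses of $A$-adequacy and primeness—via Theorem \ref{thm:incompress} and the structural properties of the polyhedral decomposition—are what exclude such pathologies.
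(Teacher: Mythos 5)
Your overall skeleton (prime polyhedral decomposition, characteristic submanifold spanned by essential product disks, Euler characteristic bookkeeping against $\chi(M_A)=\chi(S_A)=\chi(\GA)$) is the same as the paper's, but two of your key identifications are wrong, and they are exactly where the real work lies. First, your ``simple'' EPDs are white bigon faces, which you claim biject with the duplicate edges of $\GA$ and which you attribute to twist regions. This is false: the EPDs supported in single (lower) polyhedra are in one-to-one correspondence with $2$--edge loops of $\GA$, and a $2$--edge loop need not have its two edges in the same twist region (Figure \ref{fig:EPD}); such a loop contributes a properly embedded disk running through two ideal vertices of a lower checkerboard polyhedron, not a white bigon face of the decomposition. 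With your count, the $I$--bundle contribution coming from passing $\GA \to \GRA$ is undercounted, so the $\negeul(\GRA)$ term does not fall out of your bookkeeping. Second, you define the correction term $\|E_c\|$ as the number of spanning EPDs that ``cross at least two reduced polyhedra.'' The paper's central technical result, Theorem \ref{thm:epd-span}, goes the other way: one proves that every $I$--bundle component with negative Euler characteristic is spanned by EPDs \emph{each embedded in a single polyhedron} of the (prime) decomposition. The ``complex'' EPDs of Theorem \ref{thm:guts-general} are those in this normalized spanning set that lie in the \emph{upper} polyhedron and bound nontrivial portions of $H_A$ on both sides -- they do not straddle polyhedra. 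Your proposed normal form (allowing disks that survive across several polyhedra and counting them) skips precisely this normalization argument, and with your definition it is not clear that $\|E_c\|$ is well defined, independent of the chosen maximal collection, or computable from the diagram, which is the content of the phrase ``diagrammatic quantity'' in the statement.

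Concretely, to repair the argument you would need: (i) the normalization step showing any EPD in the characteristic submanifold can be replaced, within its $I$--bundle component, by a union of EPDs each lying in one polyhedron (this uses the primeness achieved in Section \ref{subsec:primeness} and normal surface theory in the checkerboard--colored polyhedra); (ii) the combinatorial classification of such single--polyhedron EPDs, giving the bijection with $2$--edge loops in the lower polyhedra and the definition of complex EPDs in the upper polyhedron; and (iii) a careful count relating the number of spanning EPDs to $\negeul$ of the $I$--bundle (cutting along $n$ disks with solid--torus and ball complement does not simply contribute $-n$; parallelism and the number of resulting balls must be controlled). Your appeal to ``absorbing solid tori over twist regions'' and to a reduced decomposition ``whose shaded spine is precisely $\GRA$'' is where these three issues are currently hidden.
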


The quantity $|| E_c ||$ is the number of \emph{complex essential
  product disks} (EPDs).  We will give its definition and examples in
the next subsection.  For now, we point out that in many cases, the
quantity $ || E_c||$ vanishes. For example, this happens for
alternating links \cite{lackenby:volume-alt}, as well as for most
Montesinos links \cite[Corollary 9.21]{fkp:gutsjp}.

When we combine Theorems \ref{thm:ast-estimate} and
\ref{thm:guts-general}, and recall that $S^3\cut S_A$ is
homeomorphic to $(S^3\setminus K)\cut S_A$, we obtain
$$\vol(S^3
\setminus K) \: \geq \: v_8\, \negeul(\guts(S^3\cut S_A)) \: =\:
\negeul(\GRA)-|| E_c||,$$ 
where the equality comes from Theorem
\ref{thm:guts-general}. This leads to the following.

\begin{theorem}\label{thm:volume}
  Let $D=D(K)$ be a prime $A$--adequate diagram of a hyperbolic link
  $K$.  Then
$$\vol(S^3 \setminus K) \: \geq \: v_8\, (\negeul(\GRA)-|| E_c||),$$
where $|| E_c||$ is the same diagrammatic quantity as in the statement of
  Theorem \ref{thm:guts-general}, and $v_8 = 3.6638...$ is the volume
  of a regular ideal octahedron.
\end{theorem}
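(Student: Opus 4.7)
The proof is essentially a synthesis of the three structural results already developed in the preceding sections, with the role of the present theorem being to bundle them into a clean diagrammatic bound. My plan is to verify the hypotheses of each cited theorem in turn, then chain the inequalities.

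First, I need the surface $S_A$ to be essential in $S^3 \setminus K$. Since $D$ is $A$--adequate, Definition \ref{def:adequate} says that $\GA$ has no $1$--edge loops, so Theorem \ref{thm:incompress} (Ozawa) applies and yields that $S_A$ is essential in $S^3 \setminus K$. The fact that the diagram $D$ is prime will be important to feed into Theorem \ref{thm:guts-general} below (since it is invoked in the analysis of the polyhedral decomposition), and the hypothesis that $K$ is hyperbolic makes $M = S^3 \setminus K$ a finite--volume hyperbolic $3$--manifold, as required for the Agol--Storm--Thurston estimate.

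Second, I apply Theorem \ref{thm:ast-estimate} to the hyperbolic manifold $M = S^3 \setminus K$ and the essential surface $S = S_A$. This gives
\[
\vol(S^3 \setminus K) \: \geq \: v_8 \cdot \negeul\bigl(\guts\bigl((S^3 \setminus K) \cut S_A\bigr)\bigr).
\]
By Definition \ref{def:cut}, $(S^3 \setminus K) \cut S_A$ is homeomorphic to $S^3 \cut S_A$ (both are obtained by removing a regular neighborhood of $S_A \cup K$ from $S^3$), so I may rewrite the right-hand side as $v_8 \cdot \negeul(\guts(S^3 \cut S_A))$.

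Third, I invoke Theorem \ref{thm:guts-general}, whose hypotheses (that $D$ is $A$--adequate with essential state surface $S_A$) are in force, to replace the guts quantity with its purely diagrammatic expression:
\[
\negeul\bigl(\guts(S^3 \cut S_A)\bigr) \: = \: \negeul(\GRA) - \|E_c\|.
\]
Substituting yields the claimed inequality. The main obstacle in this proof is not in the present step-by-step assembly, which is genuinely a one-line chain, but rather is entirely absorbed into Theorem \ref{thm:guts-general}: identifying the guts of $M_A$ diagrammatically requires the polyhedral decomposition of Section \ref{sec:polyhedra} together with a careful accounting of essential product disks. Once that heavy lifting is taken as given, the volume bound in Theorem \ref{thm:volume} follows immediately by combining it with the Agol--Storm--Thurston estimate and Ozawa's essentiality criterion.
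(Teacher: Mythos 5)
Your proof is correct and follows essentially the same route as the paper: the authors likewise combine the Agol--Storm--Thurston estimate (Theorem \ref{thm:ast-estimate}) applied to the essential surface $S_A$ (essential by Ozawa's Theorem \ref{thm:incompress} under $A$--adequacy) with the guts computation of Theorem \ref{thm:guts-general}, using the homeomorphism $(S^3\setminus K)\cut S_A \cong S^3 \cut S_A$. Nothing further is needed; the heavy lifting is indeed contained in Theorem \ref{thm:guts-general}, exactly as you note.
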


There is a symmetric result for $B$--adequate diagrams.

\subsection{Essential product disks}\label{subsec:epds}

We now describe the quantity $||E_c||$ more carefully, and discuss how it relates to  the graph $\GA$.  To begin, we review some
terminology.

\begin{define}\label{def:epd}
Let $M$ be a $3$--manifold with boundary, and with prescribed parabolic locus consisting of annuli. 
An \emph{essential product disk} in $M$, or \emph{EPD} for short, is a properly embedded disk
whose boundary has geometric intersection number $2$ with the parabolic locus. Note that an EPD is an $I$--bundle over an interval. See Figure \ref{fig:EPD} for an example of such a disk in $M_A = S^3 \cut S_A$.

\begin{figure}
\psfrag{S}{$S_A$}
\includegraphics{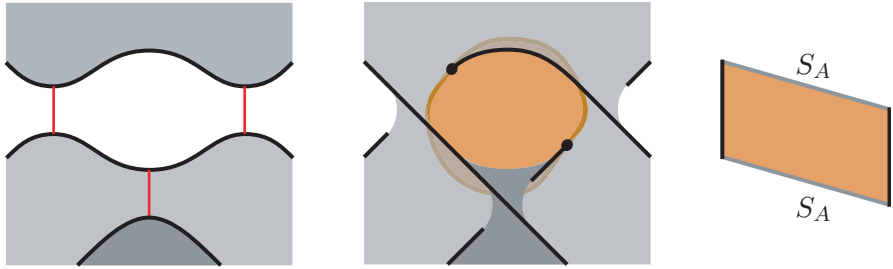}
\caption{An EPD in $M_A$ containing a single 2-edge loop in $\GA$, with
  edges in different twist regions in the link diagram.}
\label{fig:EPD}
\end{figure}

If $B$ is an $I$--bundle in $M$, we say that a collection $\{D_1, \ldots, D_n\}$ of disjoint EPDs \emph{spans} $B$ if their complement in $B$ is a disjoin union of solid tori and $3$--balls.
\end{define}

Essential product disks are integral to understanding the size of $\guts(M_A)$.  In particular, the proof of Theorem \ref{thm:guts-general} requires calculating the Euler characteristic of all the $I$--bundle components in the JSJ decomposition of $M_A = S^3 \cut S_A$. To do this, we show that each component of the $I$--bundle is spanned by EPDs, and find a particular spanning set. (See Theorem \ref{thm:epd-span} in Section \ref{sec:polyhedra}.)

\begin{define}\label{primetwist}
Two crossings in $D$ are defined to be \emph{twist equivalent} if
there is a simple closed curve in the projection plane that meets $D$
at exactly those two crossings. The diagram is called \emph{twist
  reduced} if every equivalence class of crossings is a \emph{twist
  region} (a chain of crossings between two strands of $K$).  The
number of equivalence classes is denoted $t(D)$, the \emph{twist
  number} of $D$.
\label{def:prime-diagram}
\end{define}

Every twist region in $D(K)$ with at least two crossings gives rise to
EPDs.  For instance, in Figure \ref{fig:EPD-twist}, there are three crossings 
in the twist region.  The boundary
of each EPD shown lies on the state surface $S_A$, and crosses the
knot diagram exactly twice.  Note there are two EPDs that encircle one bigon each, 
and one EPD that encircles two
bigons. Any two of these will suffice in a spanning set.

\begin{figure}
  \begin{center}
    \begin{tabular}{ccccc}
       \includegraphics{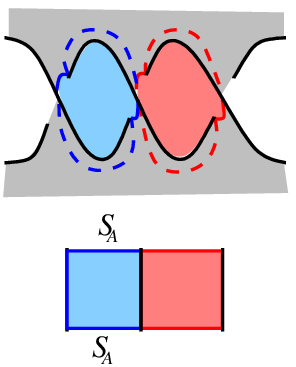}
      & \hspace{.2in} & 
      \includegraphics{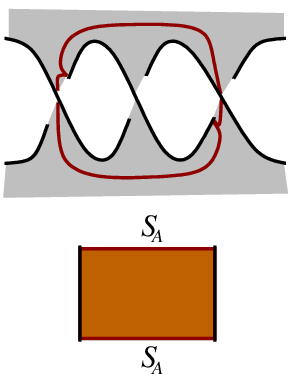}
    \end{tabular}
  \end{center}
  \caption{Shown are three EPDs in a twist region.}
  \label{fig:EPD-twist}
  \end{figure}

The essential product disk in Figure \ref{fig:EPD} does not lie in a single twist
region. For another example that does not lie in a single twist
region, see Figure \ref{fig:twoloops}. Note that in each of Figures
\ref{fig:EPD}, \ref{fig:EPD-twist}, and \ref{fig:twoloops}, the EPD
can be naturally associated to one or more $2$--edge loops in the
state graph $\GA$.

\begin{figure}
\includegraphics[height=1.8in]{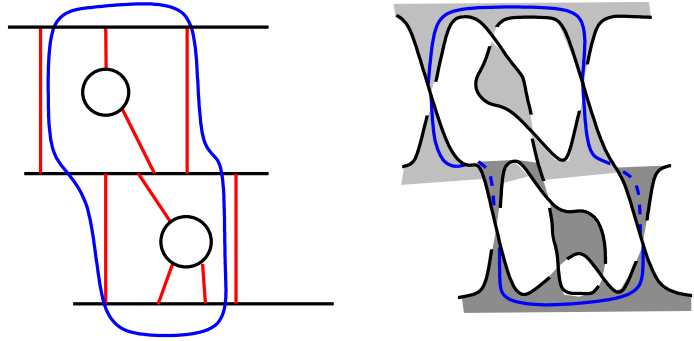}
\caption{An EPD in $M_A$ containing two  2-edge loops of $\GA$.}
\label{fig:twoloops}
\end{figure}

In Figure \ref{fig:twoloops}, the EPD exhibits more complicated
behavior than in the other examples, in that it bounds nontrivial
portions of the graph $H_A$.  We call an EPD that bounds nontrivial
portions of $H_A$ on both sides a \emph{complex} EPD. The minimal
number of complex EPDs in the spanning set of the maximal $I$--bundle
of $M_A$ is denoted $||E_c||$, and is exactly the correction term in
Theorem \ref{thm:guts-general}.  By analyzing EPDs, we show the
following.

\begin{prop}\label{prop:no2loops}
  If $D$ is prime and $A$--adequate, such that every 2--edge loop in
  $\GA$ has edges belonging to the same twist region, then $||E_c|| =
  0$.  Hence
$$\vol(S^3 \setminus K) \: \geq \: v_8 \, (\negeul(\GRA)).$$
\end{prop}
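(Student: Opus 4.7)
The plan is to identify each essential product disk in a spanning set for the maximal $I$--bundle of $M_A$ with one or more $2$--edge loops in $\GA$, then use the hypothesis on twist regions to conclude that every EPD in such a spanning set is non-complex. Once this is done, plugging $||E_c|| = 0$ into Theorem \ref{thm:volume} delivers the volume inequality directly.

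First, I would invoke the correspondence between EPDs and $2$--edge loops of $\GA$, which follows from the polyhedral decomposition to be developed in Section \ref{sec:polyhedra} together with Theorem \ref{thm:epd-span}. That correspondence says that each EPD in a chosen spanning set of the maximal $I$--bundle of $M_A$ may be isotoped so that its boundary arcs on $S_A$ run along one or more $2$--edge loops in $\GA$; conversely every such loop carries an EPD of the sort shown in Figures \ref{fig:EPD}, \ref{fig:EPD-twist}, and \ref{fig:twoloops}.

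Next, I would classify these EPDs according to the geometry of their underlying $2$--edge loops. If both edges of a loop lie in the same twist region, then the loop encloses a bigon in $D$, and the associated EPD has the form shown in Figure \ref{fig:EPD-twist}: one side of the disk bounds only that bigon, which is a trivial portion of the trivalent graph $H_A$. By the very definition of complex EPD, such a disk is not complex. If instead the two edges lie in different twist regions (as in Figures \ref{fig:EPD} and \ref{fig:twoloops}), both sides of the disk may cut off nontrivial subgraphs of $H_A$, so the EPD can be complex. The hypothesis of the proposition rules out the second case entirely, so every EPD in the spanning set is non-complex; hence $||E_c|| = 0$, and the inequality $\vol(S^3 \setminus K) \geq v_8 \, \negeul(\GRA)$ is immediate from Theorem \ref{thm:volume}.

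The main obstacle will be the first step: rigorously setting up the EPD/$2$--edge loop correspondence and certifying that the spanning set can actually be chosen to consist of the disks described above. This is precisely where the polyhedral decomposition of Section \ref{sec:polyhedra} does the heavy lifting, by cutting $M_A$ into combinatorially rigid pieces in which EPDs become disks that can be read directly off $\GA$ and $H_A$; once Theorem \ref{thm:epd-span} is invoked to guarantee an EPD spanning set of the maximal $I$--bundle, the combinatorial hypothesis on $\GA$ translates immediately into the vanishing of $||E_c||$.
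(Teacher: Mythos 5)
Your overall route is the one the paper itself sketches: take the spanning set of EPDs provided by Theorem \ref{thm:epd-span}, relate those disks to $2$--edge loops of $\GA$, argue the twist--region hypothesis leaves no complex disks, and then feed $||E_c||=0$ into Theorem \ref{thm:volume} (i.e.\ Theorems \ref{thm:guts-general} and \ref{thm:ast-estimate}). The gap is at the step you flag as ``the main obstacle'' and then declare to translate ``immediately'': the passage from the hypothesis on $2$--edge loops to the vanishing of $||E_c||$. The disks counted by $||E_c||$ are exactly the complex EPDs, and in the spanning set these live in the \emph{upper} polyhedron, where the relationship to $2$--edge loops is many--to--one: a single EPD may run over several $2$--edge loops (Figure \ref{fig:twoloops}), and being ``associated'' to a loop whose two edges lie in one twist region does not mean the disk is the standard bigon disk of Figure \ref{fig:EPD-twist}. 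Your dichotomy tacitly assumes each spanning EPD is determined by a single loop and inherits that loop's local picture; that is precisely what fails upstairs, and it is why the argument in \cite{fkp:gutsjp} requires a substantial combinatorial case analysis of how the boundary of an EPD in the upper polyhedron traverses tentacles and shaded faces of the prime decomposition of Section \ref{sec:polyhedra}.

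Concretely, what is missing is a proof that a complex EPD --- one cutting off nontrivial portions of $H_A$ on \emph{both} sides --- forces a $2$--edge loop of $\GA$ whose edges lie in \emph{different} twist regions (equivalently, that under the hypothesis every upper--polyhedron EPD parabolically compresses to simple bigon pieces, so a minimal spanning set contains no complex disks). This is where primeness of $D$ is actually used, both to obtain the prime polyhedral decomposition and in the case analysis; your proposal states primeness in the hypothesis but never invokes it. As written, the hypothesis on $2$--edge loops only controls the lower--polyhedron EPDs, which correspond one--to--one to $2$--edge loops and were never the source of $||E_c||$ in the first place.
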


\subsection{Volume estimates}
One family of knots and links that satisfies Proposition \ref{prop:no2loops} is that of alternating links
 \cite{lackenby:volume-alt}. If $D=D(K)$ is a prime, twist--reduced alternating link diagram, then it is both $A$-- and
$B$--adequate, and for each 2--edge loop in $\GA$ or $\GB$, both edges
belong to the same twist region.  Theorem  \ref
{thm:volume} gives lower bounds on volume in terms of both
$\negeul(\GRA)$ and $\negeul(\GRB)$. By averaging these two lower
bounds, one recovers Lackenby's lower bound on the volume of hyperbolic
alternating links, in terms of the twist number $t(D)$.

\begin{theorem}[Theorem 2.2 of \cite{ast}]
  Let $D$ be a reduced alternating diagram of a hyperbolic link $K$.
  Then
$$\frac{ v_8}{2} \, (t(D)-2)  \: \leq \:\vol(S^3 \setminus K) \: < \:
  10v_3\, (t(D)-1),$$  
where $v_3 = 1.0149...$ is the volume of a regular ideal tetrahedron  
and $v_8 = 3.6638...$ is the volume of a regular ideal octahedron. 
\label{thm:lacalternating}
\end{theorem}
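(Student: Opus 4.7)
I would derive the lower bound by averaging two applications of Theorem \ref{thm:volume} to the state surfaces $S_A$ and $S_B$, and address the upper bound via the independent augmented-link construction.

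For the \emph{lower bound}, first verify that a reduced, prime, twist-reduced alternating diagram $D$ is both $A$-adequate and $B$-adequate (reducedness precludes 1-edge loops in $\GA,\GB$), and that every 2-edge loop in $\GA$ or $\GB$ has both edges lying in a single twist region (by twist-reducedness). Proposition \ref{prop:no2loops} then yields $\vol(S^3\setminus K)\geq v_8\,\negeul(\GRA)$ and $\vol(S^3\setminus K)\geq v_8\,\negeul(\GRB)$, so averaging reduces the problem to the combinatorial inequality $\negeul(\GRA)+\negeul(\GRB)\geq t(D)-2$. Since the vertices of $\GA$ and $\GB$ together are the checkerboard regions on $S^2$, Euler's formula gives $v(\GRA)+v(\GRB)=c(D)+2$. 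Each twist region of size $k_i$ in a twist-reduced alternating diagram of a hyperbolic link contributes $k_i$ parallel edges to exactly one of $\GA,\GB$ (collapsing to a single edge in the reduced graph) and $k_i$ distinct edges to the other; hence $e(\GRA)+e(\GRB)=t(D)+c(D)$ and $\chi(\GRA)+\chi(\GRB)=2-t(D)$. Since $\negeul(G)\geq -\chi(G)$ for any graph $G$, the inequality $\negeul(\GRA)+\negeul(\GRB)\geq t(D)-2$ follows, and multiplying by $v_8/2$ yields the lower bound.

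For the \emph{upper bound}, I use the augmented-link approach of Lackenby with Agol--Thurston's improved constant. Adjoin an unknotted crossing circle around each of the $t(D)$ twist regions to form a hyperbolic link $L'$ containing $K$ as a sublink. The complement $S^3\setminus L'$ admits an ideal polyhedral decomposition whose total volume is bounded above by $10 v_3(t(D)-1)$, and $K$ is recovered from $L'$ by hyperbolic Dehn filling on the crossing-circle cusps, with filling slopes determined by the twist counts. Thurston's hyperbolic Dehn filling theorem then yields $\vol(S^3\setminus K)<\vol(S^3\setminus L')\leq 10 v_3(t(D)-1)$, completing the upper bound.

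The \emph{main obstacle} is the combinatorial edge count used in the lower bound. One must rule out ``degenerate'' twist regions that produce multi-edges in both $\GA$ and $\GB$ (as in the Hopf-link diagram)---such configurations occur only in non-hyperbolic alternating diagrams, but excluding them requires argument. One also has to verify that distinct twist regions contribute distinct edge classes in the reduced graphs (which follows from twist-reducedness) and to handle possible tree components of $\GRA$ or $\GRB$, arising when $S_A$ or $S_B$ is a fiber, though trees only strengthen the inequality.
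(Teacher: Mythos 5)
Your proposal is correct and follows essentially the same route as the paper: the survey obtains the lower bound precisely by applying Theorem \ref{thm:volume} (via Proposition \ref{prop:no2loops}) to both checkerboard surfaces $S_A$ and $S_B$ of a prime, twist--reduced alternating diagram and averaging the two bounds, with your Euler--characteristic count $\chi(\GRA)+\chi(\GRB)=2-t(D)$ simply making explicit the combinatorics the survey leaves implicit, and with the same diagrammatic caveats (primeness, twist--reducedness, exclusion of $(2,k)$--torus configurations) that hyperbolicity handles. The upper bound is not proved in the paper at all but quoted from Lackenby's augmented--link Dehn--filling argument with the Agol--Thurston constant, which is exactly what you outline.
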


Theorem \ref{thm:volume} greatly expands the list of manifolds for
which we can compute explicitly the Euler characteristic of the guts,
and can be used to derive results analogous to Theorem
\ref{thm:lacalternating}.  As a sample, we state the following.

\begin{theorem}\label{thm:positive-volume}
Let $D(K)$ be a diagram of a hyperbolic link $K$, obtained as the
closure of a positive braid with at least three crossings in each
twist region. Then
$$\frac{ 2v_8}{3} \, t(D) \: \leq \:\vol(S^3 \setminus K) \: < \: 10v_3(t(D)-1),$$ 
where $v_3 = 1.0149...$ is the volume of a regular ideal tetrahedron  
and $v_8 = 3.6638...$ is the volume of a regular ideal octahedron. 
\end{theorem}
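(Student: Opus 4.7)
The proof has an upper bound and a lower bound. The upper bound $\vol(S^3\setminus K) < 10v_3(t(D)-1)$ is Lackenby's general volume upper bound in terms of the twist number of any diagram of a hyperbolic link (extending the right inequality of Theorem \ref{thm:lacalternating} beyond the alternating case via hyperbolic Dehn filling), which I would simply cite.

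For the lower bound, the plan is to apply Theorem \ref{thm:volume} (or its symmetric $B$--adequate version, whichever of $S_A, S_B$ fails to be a fiber for our positive braid) to the corresponding state surface; for concreteness I work with $\GRA$, with the understanding that the argument transfers verbatim to $\GRB$ if needed. First I would verify the hypotheses of Theorem \ref{thm:volume}: $D$ is prime since $K$ is hyperbolic (ruling out essential spheres from composite structure), and $D$ is $A$--adequate because at every crossing of a positive braid the two endpoints lie on distinct state circles in $\GA$, so no 1--edge loop arises. Next, in each twist region of length $a_k \geq 3$, the $a_k$ associated edges of $\GA$ form a chain of $a_k - 1$ bigon vertices $b_1, \ldots, b_{a_k-1}$, with two additional edges attaching $b_1$ and $b_{a_k-1}$ to outer state circles; since $a_k - 1 \geq 2$, consecutive edges touch distinct pairs of vertices, so no multi-edge arises within a twist region; and since the bigons of different twist regions are distinct vertices, no multi-edge arises between twist regions. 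Thus $\GA=\GRA$, the hypothesis of Proposition \ref{prop:no2loops} is vacuously satisfied, $\|E_c\|=0$, and the proposition gives $\vol(S^3\setminus K) \geq v_8 \, \negeul(\GRA)$.

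The remaining task is the inequality $\negeul(\GRA) \geq \tfrac{2}{3}\, t(D)$. A direct count yields $v(\GRA) = (c-t)+W$ and $e(\GRA)=c$, where $W$ is the number of outer state circles (state circles not arising as bigons inside a twist region), so $\negeul(\GRA) = t(D) - W$ and the needed inequality reduces to $W \leq t(D)/3$. Equivalently, the outer circles have average degree at least $6$ in the ``twist-region graph'' obtained from $\GA$ by contracting each twist-region chain to a single edge.

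\textbf{The main obstacle} is precisely the estimate $W \leq t(D)/3$, and this is where the hyperbolicity of $K$ is genuinely used. The bound can fail without hyperbolicity: for instance, the composite closure of $\sigma_1^3\sigma_2^3$ (the trefoil\,$\#$\,trefoil) realizes $W=1$, $t=2$, with $W/t=\tfrac{1}{2}$. To rule out small values of $W/t$ under hyperbolicity, I would argue that an outer state circle with too few incident twist regions must bound a disk or annulus in $S^3\setminus K$ which, together with the adjacent twist regions, produces an essential sphere, annulus, or torus, contradicting the hyperbolic assumption. Making this rigorous requires the polyhedral decomposition of $M_A$ from \cite{fkp:gutsjp} and a case analysis on low-degree outer circles in the twist-region graph. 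Combining everything yields $\vol(S^3\setminus K) \geq v_8\cdot\tfrac{2}{3}\,t(D) = \tfrac{2v_8}{3}\,t(D)$.
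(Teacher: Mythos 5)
Your skeleton does match the architecture of the proof in \cite{fkp:gutsjp}: the upper bound is quoted from Lackenby--Agol--Thurston, and the lower bound is extracted from Theorem \ref{thm:volume} together with Proposition \ref{prop:no2loops}, applied to the state surface of the non-Seifert state of the braid diagram (your hedge about $S_A$ versus $S_B$ is genuinely needed, since for a positive braid one of the two states is the Seifert state, whose reduced graph is a tree by Theorem \ref{thm:fiber-tree} and so carries no guts). Your bookkeeping in that state is also right: each twist region of length $a_k \geq 3$ contributes a chain of $a_k-1$ bigon circles, there are no $1$--edge loops and no multi-edges, and the count gives $\negeul$ of the reduced graph equal to $t(D) - W$, where $W$ is the number of non-bigon (``outer'') state circles.

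The gap is at precisely the step that produces the constant $2/3$, and it is twofold. First, ``$D$ is prime since $K$ is hyperbolic'' is not a valid inference: a prime (indeed hyperbolic) link can admit non-prime diagrams, and primeness of $D$ is needed both to invoke Proposition \ref{prop:no2loops} and for the counting estimate, so it requires an argument specific to positive braid closures. Second, and decisively, the inequality $W \leq t(D)/3$ is not proved; you only propose that an outer circle ``with too few incident twist regions'' forces an essential sphere, annulus or torus, contradicting hyperbolicity. Even if executed, that mechanism falls quantitatively short: circles of degree $1$ or $2$ in your twist-region graph are the ones that plausibly correspond to such essential surfaces (your granny-knot example is of this type), and excluding only those gives $2t \geq 3W$, i.e.\ $W \leq 2t/3$ and $\vol(S^3\setminus K) \geq \tfrac{v_8}{3}\,t(D)$ --- half the claimed constant. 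What is needed is your own reformulation, that the outer circles have average degree at least $6$, so degrees $3$, $4$, $5$ must also be controlled, and hyperbolicity does not obviously rule these out via essential annuli or tori. In \cite{fkp:gutsjp} this estimate (equivalently $\negeul(\GRB) \geq \tfrac{2}{3}\,t(D)$ for such diagrams, which is also what feeds the constant $15v_3$ in Corollary \ref{cor:positive-vol-jones}) is established by a combinatorial analysis of the diagram using primeness and the three-crossings-per-twist-region hypothesis, not by constructing essential surfaces. As written, the heart of the lower bound is asserted rather than proved.
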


Observe that the multiplicative constants in the upper and lower
bounds differ by a rather small factor of about $4.155$.

We obtain similarly tight two--sided volume bounds for Montesinos
links, using these guts techniques \cite{fkp:gutsjp}.

\begin{theorem} \label{thm:monte-volume}
  Let $K \subset S^3$ be a Montesinos link with a reduced Montesinos
  diagram $D(K)$.  Suppose that $D(K)$ contains at least three
  positive tangles and at least three negative tangles.  Then $K$ is a
  hyperbolic link, satisfying
  \begin{equation*}
    \frac{v_8}{4} \, \left( t(D) - \#K \right) \:
    \leq \: \vol(S^3 \setminus K) \:
    < \: 2 v_8 \, t(D),
  \end{equation*}
  where $v_8 = 3.6638...$ is the volume of a regular ideal octahedron
  and $\# K$ is the number of link components of $K$.  The upper bound
  on volume is sharp.
\end{theorem}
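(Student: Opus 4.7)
The plan is to prove the three assertions of the theorem --- hyperbolicity, the lower volume bound, and the upper volume bound --- in sequence, leveraging Theorem \ref{thm:volume} on both sides of the diagram for the lower bound and an augmented link construction for the upper bound.

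First, I would verify that under the hypothesis of at least three positive and at least three negative tangles, a reduced Montesinos diagram $D(K)$ is simultaneously $A$-- and $B$--adequate. Each rational tangle contributes a specific sub-structure to $\GA$ and $\GB$, determined by the sign of its slope; the sign conditions then preclude any $1$--edge loops in either graph. The same conditions rule out $K$ being a torus link or a Seifert fibered link, so by the classical hyperbolization results for Montesinos links (via Bonahon--Siebenmann, or Oertel), $K$ is hyperbolic. I would also check that the diagram is prime, which follows from the reduced Montesinos hypothesis.

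For the lower bound, I apply Theorem \ref{thm:volume} to both $S_A$ and $S_B$:
$$\vol(S^3 \setminus K) \, \geq \, v_8 \bigl( \negeul(\GRA) - \|E_c^A\| \bigr) \quad \text{and} \quad \vol(S^3 \setminus K) \, \geq \, v_8 \bigl( \negeul(\GRB) - \|E_c^B\| \bigr).$$
The critical input, quoted in the survey as Corollary 9.21 of \cite{fkp:gutsjp}, is that under our sign hypothesis every $2$--edge loop in $\GA$ or $\GB$ has both edges in a single twist region, so Proposition \ref{prop:no2loops} gives $\|E_c^A\| = \|E_c^B\| = 0$. Next I would perform the combinatorial bookkeeping: a positive rational tangle of $c$ crossings contributes one edge to $\GRB$ and a chain of $c$ edges with intermediate vertices to $\GA$ (and symmetrically for negative tangles). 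Summing over all tangles and accounting for the global identifications in the Montesinos construction produces the estimate $\negeul(\GRA) + \negeul(\GRB) \geq \tfrac{1}{2}\bigl( t(D) - \#K \bigr)$. Averaging the two displayed inequalities then yields $\vol(S^3 \setminus K) \geq \tfrac{v_8}{4}(t(D) - \#K)$.

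For the upper bound, I use the augmented link technique. Replacing each twist region of $D(K)$ by a two--crossing clasp encircled by a crossing circle produces an augmented Montesinos link $L$ with the same twist number, and Thurston's Dehn filling theorem gives $\vol(S^3 \setminus K) \leq \vol(S^3 \setminus L)$. The complement $S^3 \setminus L$ admits an explicit decomposition into two ideal octahedra per crossing circle, giving $\vol(S^3 \setminus L) < 2 v_8 \, t(D)$. Combining with the lower bound completes the proof.

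The main obstacle is the combinatorial Lemma relating $\negeul(\GRA) + \negeul(\GRB)$ to $t(D)$ and $\#K$, together with the verification that every $2$--edge loop in $\GA$ or $\GB$ is confined to a single twist region so that $\|E_c\| = 0$. Both of these require a careful case analysis of how positive versus negative rational tangles assemble inside the Montesinos diagram, and how the tangle endpoints are identified cyclically. The sharpness of the upper bound, which the theorem also asserts, would follow from exhibiting explicit sequences of Montesinos links whose volume approaches $2 v_8\, t(D)$, for instance via large--slope Dehn filling on fully augmented Montesinos links.
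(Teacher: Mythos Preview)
This survey paper does not actually prove Theorem~\ref{thm:monte-volume}; it is stated with a citation to \cite{fkp:gutsjp}, and the surrounding text only says the result is obtained ``using these guts techniques.'' So there is no detailed proof here to compare against, only the general methodology of Sections~\ref{sec:3D} and~\ref{sec:polyhedra}.

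That said, your outline is consistent with that methodology on the lower-bound side: you correctly invoke Theorem~\ref{thm:volume} on both the $A$-- and $B$--sides, appeal to the vanishing of $\|E_c\|$ for Montesinos links (which the survey attributes to \cite[Corollary 9.21]{fkp:gutsjp}), and then average. The survey only says $\|E_c\|$ vanishes for ``most'' Montesinos links, so you are right to flag the verification of Proposition~\ref{prop:no2loops}'s hypothesis as the real work; the actual argument in \cite{fkp:gutsjp} is more involved than a direct check that all $2$--edge loops sit in one twist region. Your combinatorial target $\negeul(\GRA)+\negeul(\GRB) \geq \tfrac{1}{2}(t(D)-\#K)$ is the right shape, but you should treat it as a nontrivial lemma rather than routine bookkeeping.

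Your upper-bound argument has a gap. The bound $2v_8\, t(D)$ is strictly sharper than the generic augmented-link bound $10v_3(t(D)-1)$ used in Theorems~\ref{thm:lacalternating} and~\ref{thm:positive-volume}, so the general ``augment and Dehn-fill'' maneuver alone cannot give it. Your assertion that the augmented complement ``admits an explicit decomposition into two ideal octahedra per crossing circle'' is not correct for arbitrary augmented links; fully augmented links decompose into two right-angled ideal polyhedra whose combinatorics depend on the diagram, not into a fixed number of octahedra per circle. Obtaining $2v_8\, t(D)$, and especially its sharpness, requires exploiting the Montesinos structure specifically. You should expect a Montesinos-adapted geometric decomposition (or belted-sum argument) here rather than the generic augmentation.
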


Similar results using different techniques have been obtained by the
authors in \cite{fkp:filling, fkp:conway, fkp:farey}, and by Purcell
in \cite{purcell:volumes}.

\subsection{Relations with the colored Jones polynomial}

The main results of \cite{fkp:gutsjp} explore the idea that the stable
coefficient $\beta_K'$ does an excellent job of measuring the
geometric and topological complexity of the manifold $M_A = S^3 \cut
S_A$. (Similarly, $\beta_K$ measures the complexity of $M_B = S^3 \cut
M_B$.)

For instance, note that we have $\abs{\beta'_K}=1 - \chi(\GRA)=0$
exactly when $\chi(\GRA)=1$, or equivalently $\GRA$ is a tree.
Thus it follows from Theorem \ref{thm:fiber-tree}  that
$\beta'_K$ is exactly the obstruction to $S_A$ being a fiber.

\begin{corollary} \label{cor:beta-fiber}
  For an $A$--adequate link $K$, the following are equivalent:
  \begin{enumerate}
  \item $\beta'_K=0$. 
  \item For \emph{every} $A$--adequate diagram of $D(K)$, $S^3
    \setminus K$ fibers over $S^1$ with fiber the corresponding state
    surface $S_A = S_A(D)$.
  \item For \emph{some} $A$--adequate diagram $D(K)$, $M_A = S^3 \cut
    S_A$ is an $I$--bundle over $S_A(D)$.
  \end{enumerate}
\end{corollary}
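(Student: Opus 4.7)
The plan is to chain together Theorem~\ref{thm:stabilized} (which identifies $\beta'_K$ with the diagrammatic quantity $1-\chi(\GRA)$) and Theorem~\ref{thm:fiber-tree} (which identifies trees with fibrations), handling the universal vs.\ existential quantifiers in (2) and (3) by exploiting the fact that $\beta'_K$ is a link invariant.

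First I would establish the translation between the polynomial condition and a graph condition. By Theorem~\ref{thm:stabilized}, for any $A$--adequate diagram $D(K)$ we have
\[
\beta'_K \: = \: |\beta'_n| \: = \: 1 - \chi(\GRA).
\]
Since a connected link diagram gives a connected graph $\GRA$, the condition $\chi(\GRA)=1$ is equivalent to $\GRA$ being a tree. Thus (1) holds if and only if $\GRA$ is a tree for \emph{every} $A$--adequate diagram of $K$ (and, by the same token, equivalently for \emph{some} such diagram: if $\beta'_K=0$ vanishes, it vanishes in all diagrams).

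Next I would prove the cycle $(1)\Rightarrow(2)\Rightarrow(3)\Rightarrow(1)$. For $(1)\Rightarrow(2)$, assume $\beta'_K=0$ and fix an arbitrary $A$--adequate diagram $D(K)$. By the previous paragraph, $\GRA$ is a tree for this diagram, and then the $(1)\Leftrightarrow(2)$ portion of Theorem~\ref{thm:fiber-tree} yields that $S^3\setminus K$ fibers over $S^1$ with fiber $S_A(D)$. Since $D$ was arbitrary, (2) holds. The implication $(2)\Rightarrow(3)$ is immediate: a fibration gives in particular an $I$--bundle structure on $M_A$ via the $(2)\Leftrightarrow(3)$ part of Theorem~\ref{thm:fiber-tree} (and the quantifier weakens from ``every'' to ``some''). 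For $(3)\Rightarrow(1)$, suppose $M_A$ is an $I$--bundle over $S_A(D)$ for some $A$--adequate $D(K)$. Then the $(1)\Leftrightarrow(3)$ part of Theorem~\ref{thm:fiber-tree} forces $\GRA$ to be a tree for that diagram, so $\chi(\GRA)=1$ and hence $\beta'_K = 1-\chi(\GRA) = 0$.

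There is really no ``hard part'' here beyond being careful with quantifiers: the genuine content has already been packaged into Theorem~\ref{thm:stabilized} and Theorem~\ref{thm:fiber-tree}. The only point worth flagging is the mild asymmetry that (2) is stated for every diagram while (3) is stated for some diagram; the cycle above shows that this apparent discrepancy collapses, essentially because $\beta'_K$ is a topological invariant of $K$ and therefore $1 - \chi(\GRA)$ must take the same value across all $A$--adequate diagrams of $K$.
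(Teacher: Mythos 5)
Your proof is correct and follows essentially the same route as the paper: the paper also combines Theorem~\ref{thm:stabilized} (giving $\beta'_K = 1-\chi(\GRA)$, so $\beta'_K=0$ exactly when $\GRA$ is a tree) with the equivalences of Theorem~\ref{thm:fiber-tree}. Your explicit handling of the ``every diagram'' versus ``some diagram'' quantifiers via the diagram-independence of $\beta'_K$ is exactly the observation the paper leaves implicit.
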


Similarly, $\abs{\beta_K'} = 1$ precisely when $S_A$ is a fibroid of a
particular type \cite[Theorem 9.18]{fkp:gutsjp}.  In general, the JSJ
decomposition of $M_A$ contains guts: the non-trivial hyperbolic
pieces. In this case, $\abs{\beta_K'}$ measures the complexity of the
guts together with certain complicated parts of the maximal
$I$--bundle of $M_A$.

\begin{theorem}  \label{thm:no2loops}
  Suppose $K$ is an $A$--adequate link whose stable colored Jones
  coefficient is $\beta_K' \neq 0$.  Then, for every $A$--adequate
  diagram $D(K)$,
$$\negeul( \guts(M_A)) + || E_c || \: = \: \abs{\beta'_K} - 1. $$
  Furthermore, if $D$ is prime and every 2--edge loop in $\GA$ has
  edges belonging to the same twist region, then $||E_c|| = 0$ and
$$\negeul( \guts(M_A)) \: = \: \abs{\beta'_K}- 1. $$
\end{theorem}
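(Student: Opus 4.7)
The plan is to combine three results established earlier in the excerpt: Theorem \ref{thm:stabilized}, which identifies $\abs{\beta'_K}$ with a topological invariant of the reduced state graph $\GRA$; Theorem \ref{thm:guts-general}, which expresses $\negeul(\guts(M_A))$ in terms of the same graph and the correction term $|| E_c ||$; and, for the second assertion, Proposition \ref{prop:no2loops}, which rules out complex EPDs under the stated hypotheses. The theorem then reduces to a bookkeeping computation.

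First I would translate the hypothesis $\beta'_K \neq 0$ into a graph-theoretic statement about $\GRA$. Fix any $A$--adequate diagram $D(K)$; by the standard convention we may take $D$ connected, so that $\GA$, and hence $\GRA$, is a connected graph. Theorem \ref{thm:stabilized} gives
\[ \abs{\beta'_K} \: = \: 1 - \chi(\GRA). \]
The hypothesis $\beta'_K \neq 0$ therefore forces $\chi(\GRA) \neq 1$, i.e.\ $\GRA$ is not a tree. Since $\GRA$ is connected and not a tree, $\chi(\GRA) \leq 0$, so
\[ \negeul(\GRA) \: = \: -\chi(\GRA) \: = \: \abs{\beta'_K} - 1. \]

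Next I would invoke Theorem \ref{thm:guts-general}, which yields $\negeul(\guts(M_A)) = \negeul(\GRA) - || E_c ||$. Rearranging and substituting the previous identity gives
\[ \negeul(\guts(M_A)) + || E_c || \: = \: \negeul(\GRA) \: = \: \abs{\beta'_K} - 1, \]
which is the first assertion. For the second statement, the additional hypotheses on $D$ — primeness and the restriction of all $2$--edge loops in $\GA$ to individual twist regions — are precisely the assumptions of Proposition \ref{prop:no2loops}, which yields $|| E_c || = 0$. Substituting into the displayed equation gives $\negeul(\guts(M_A)) = \abs{\beta'_K} - 1$, completing the proof.

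The main obstacle here is not in this argument itself, which amounts to comparing two already-established formulas for the same graph-theoretic quantity $\negeul(\GRA)$; it lies in the results being invoked. The substantive work sits in Theorem \ref{thm:guts-general}, which requires the polyhedral decomposition of Section \ref{sec:polyhedra} and a careful spanning-set analysis of essential product disks. The only genuine care needed at this stage is to verify that $\GRA$ is connected, so that the implication ``not a tree $\Rightarrow \chi(\GRA) \leq 0$'' and the conversion $\negeul(\GRA) = -\chi(\GRA)$ are both valid; this is ensured by the standard convention of working with connected diagrams of $K$.
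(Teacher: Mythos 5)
Your proposal is correct and follows the same route the paper intends: combine Theorem \ref{thm:stabilized} (giving $\abs{\beta'_K}=1-\chi(\GRA)$), Theorem \ref{thm:guts-general} (giving $\negeul(\guts(M_A))=\negeul(\GRA)-||E_c||$), and Proposition \ref{prop:no2loops} for the vanishing of $||E_c||$, with the hypothesis $\beta'_K\neq 0$ used exactly as you do to guarantee $\GRA$ is not a tree so that $\negeul(\GRA)=-\chi(\GRA)$. Your attention to connectedness of $\GRA$ is the right (and only) point of care in this bookkeeping step; the substantive content indeed lives in the cited results.
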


The volume conjecture of Kashaev and Murakami--Murakami
\cite{kashaev:vol-conj, murakami:vol-conj} states that all hyperbolic
knots satisfy
$$2\pi\lim_{n\to \infty} { \frac{\log \abs {J^n_K(e^{2\pi i / n} ) }}
  {n}}=\vol(S^3 \setminus K).$$

If the volume conjecture is true, then for large $n$, there would be a
relation between the coefficients of $J^n_K(t)$ and the volume of the
knot complement.  In recent years, articles by Dasbach and Lin and the
authors have established relations for several classes of knots
\cite{dasbach-lin:volumish, fkp:filling, fkp:conway, fkp:farey}.
However, in all these results, the lower bound involved first showing
that the Jones coefficients give a lower bound on twist number, then
showing twist number gives a lower bound on volume.  Each of these
steps is known to fail outside special families of knots
\cite{fkp:farey, fkp:coils}.  Moreover, the two-step argument is
indirect, and the constants produced are not sharp.  By contrast, in
\cite{fkp:gutsjp}, we bound volume below in terms of $\negeul(\guts)$,
which is directly related to colored Jones coefficients.  This
yields sharper lower bounds on volumes, along with a more intrinsic
explanation for why these lower bounds exist.  For instance, Theorems  \ref{thm:positive-volume} and \ref{thm:monte-volume} have
the following corollaries.

\begin{corollary} \label{cor:positive-vol-jones}
Suppose that a hyperbolic link $K$ is the closure of a positive braid
with at least three crossings in each
twist region. Then
$$v_8 \, ( \abs{\beta'_K}-1 )\: \leq \:\vol(S^3 \setminus K) \: < \: 15 v_3 \, (\abs{\beta'_K}-1) - 10 v_3,$$ 
where $v_3 = 1.0149...$ is the volume of a regular ideal tetrahedron  
and $v_8 = 3.6638...$ is the volume of a regular ideal octahedron.
\end{corollary}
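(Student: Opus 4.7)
The plan is to deduce Corollary \ref{cor:positive-vol-jones} from Theorem \ref{thm:positive-volume} by translating the twist-number bound into one involving $|\beta'_K|$, using the identification $\negeul(\guts(M_A)) = |\beta'_K| - 1$ supplied by Theorem \ref{thm:no2loops}. The argument splits naturally into a lower and an upper bound on volume, both of which rely on first verifying the refined combinatorial hypothesis $||E_c|| = 0$.

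Before invoking the main theorems, I would check that a positive braid diagram $D(K)$ in twist-reduced form with at least three crossings in each twist region is prime, $A$--adequate, and has the property that every $2$--edge loop in $\GA$ has both edges belonging to the same twist region. Primeness and $A$--adequacy are easy to verify directly from the positive braid structure; for instance, in a twist region of size $\geq 2$, no crossing has its two adjacent all--$A$ state circles equal. The $2$--edge loop condition is subtler and requires understanding how all--$A$ state circles from different twist regions can interact, together with the twist-reduced hypothesis. Once established, it implies by Proposition \ref{prop:no2loops} that $||E_c|| = 0$, and hence by Theorem \ref{thm:no2loops} that $\negeul(\guts(M_A)) = |\beta'_K| - 1$. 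Combined with the Agol--Storm--Thurston estimate (Theorem \ref{thm:ast-estimate}), this gives the lower bound
$$\vol(S^3 \setminus K) \:\geq\: v_8 \, \negeul(\guts(M_A)) \:=\: v_8 \, (|\beta'_K| - 1).$$

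For the upper bound, Theorem \ref{thm:positive-volume} already yields $\vol(S^3 \setminus K) < 10 v_3 (t(D) - 1)$; it remains to bound $t(D)$ by $|\beta'_K|$. The lower bound $\tfrac{2 v_8}{3}\, t(D) \leq \vol(S^3 \setminus K)$ in that same theorem is obtained, via Theorems \ref{thm:volume} and \ref{thm:guts-general} together with $||E_c|| = 0$, from the purely combinatorial inequality $\negeul(\GRA) \geq \tfrac{2}{3}\, t(D)$ on the state graphs of positive braids. Theorem \ref{thm:stabilized} identifies $|\beta'_K| - 1$ with $\negeul(\GRA)$ (using $\chi_+(\GRA) = 0$ outside the fibered case), so the combinatorial inequality rearranges to $t(D) \leq \tfrac{3}{2}(|\beta'_K| - 1)$, and therefore
$$\vol(S^3 \setminus K) \:<\: 10 v_3 (t(D) - 1) \:\leq\: 10 v_3\bigl(\tfrac{3}{2}(|\beta'_K| - 1) - 1\bigr) \:=\: 15 v_3 (|\beta'_K| - 1) - 10 v_3.$$

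The main obstacle is the combinatorial $2$--edge loop verification for positive braids. Although Proposition \ref{prop:no2loops} makes it plausible, the actual verification must handle the fact that all--$A$ state circles of a positive braid (which differ from its Seifert circles) can thread through several twist regions; the argument should invoke the twist-reduced hypothesis to conclude that two crossings contributing parallel edges of $\GA$ must be twist-equivalent, hence share a twist region. Once this is in place, the corollary follows by concatenating Theorems \ref{thm:stabilized}, \ref{thm:no2loops}, \ref{thm:positive-volume}, and \ref{thm:ast-estimate}.
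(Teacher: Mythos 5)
Your proposal is correct and takes essentially the same route as the paper: check that such a diagram is prime, adequate, and has no problematic $2$--edge loops, so that $||E_c||=0$ and $\negeul(\guts(M_A)) = \abs{\beta'_K}-1$, which with Theorem \ref{thm:ast-estimate} gives the lower bound. The upper bound is likewise obtained exactly as you describe, by feeding the combinatorial estimate $\negeul(\GRA) \geq \tfrac{2}{3}\, t(D)$ underlying the lower bound of Theorem \ref{thm:positive-volume} (equivalently $t(D) \leq \tfrac{3}{2}(\abs{\beta'_K}-1)$) into $\vol(S^3\setminus K) < 10 v_3 (t(D)-1)$.
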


\begin{corollary}\label{cor:jones-volumemonte}
  Let $K \subset S^3$ be a Montesinos link with a reduced Montesinos
  diagram $D(K)$.  Suppose that $D(K)$ contains at least three
  positive tangles and at least three negative tangles.  Then $K$ is a
  hyperbolic link, satisfying
$$ v_8 \left( \max \{ \abs{\beta_K}, \abs{\beta'_K} \} -1 \right) \:
  \leq \: \vol(S^3 \setminus K) \: < \: 4 v_8 \left( \abs{\beta_K} +
  \abs{\beta'_K} -2 \right) + 2 v_8 \, (\# K),$$ where $\#K$ is the
  number of link components of $K$.
\end{corollary}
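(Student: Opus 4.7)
The plan is to establish the two inequalities separately, using the main machinery of the paper. The lower bound will follow from applying the guts--coefficient equality (Theorem \ref{thm:no2loops}) together with the Agol--Storm--Thurston estimate (Theorem \ref{thm:ast-estimate}), invoked twice: once for the surface $S_A$ and once for $S_B$. The upper bound will follow from the upper half of the Montesinos volume bound (Theorem \ref{thm:monte-volume}) together with a combinatorial comparison between the twist number $t(D)$ and the reduced state graphs $\GRA, \GRB$.

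For the lower bound, I first observe that a reduced Montesinos diagram with at least three positive and three negative tangles is both $A$--adequate and $B$--adequate, and it is prime. Moreover, as pointed out in the discussion preceding Theorem \ref{thm:volume}, the corollary in \cite[Corollary~9.21]{fkp:gutsjp} ensures that the complex EPD correction term vanishes for such Montesinos links, i.e.\ $||E_c|| = 0$ both for $M_A$ and for $M_B$. Under these hypotheses, Theorem \ref{thm:no2loops} yields the equalities
\[
\negeul(\guts(M_A)) = \abs{\beta'_K} - 1,
\qquad
\negeul(\guts(M_B)) = \abs{\beta_K} - 1.
\]
Since $K$ is hyperbolic, applying Theorem \ref{thm:ast-estimate} to $S_A$ and to $S_B$ separately gives $\vol(S^3\setminus K) \geq v_8 (\abs{\beta'_K}-1)$ and $\vol(S^3\setminus K) \geq v_8 (\abs{\beta_K}-1)$. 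Taking the larger of the two bounds is exactly the left-hand side of the desired inequality.

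For the upper bound, the starting point is the right half of Theorem \ref{thm:monte-volume}, namely $\vol(S^3 \setminus K) < 2 v_8 \, t(D)$. Rearranging the target inequality, it suffices to prove the purely diagrammatic estimate
\[
t(D) \;\leq\; 2(\abs{\beta_K} + \abs{\beta'_K} - 2) + \#K
\;=\; 2\,\negeul(\GRA) + 2\,\negeul(\GRB) + \#K,
\]
using Theorem \ref{thm:stabilized} to replace the stable coefficients by negative Euler characteristics of reduced state graphs. To prove this, I would exploit the Montesinos structure: the diagram is a cyclic plumbing of rational tangles, and each twist region lies in exactly one such tangle. A positive twist region contributes bigons on the $A$--side and hence collapses to a single edge of $\GRA$, while a negative twist region contributes bigons on the $B$--side and collapses to a single edge of $\GRB$; the remaining ``long'' twist regions within each tangle contribute single edges (which are forced to close up into cycles by the continued-fraction structure of the rational tangle) to the opposite reduced graph. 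Counting edges, vertices, and components of $\GRA$ and $\GRB$ in terms of the continued-fraction data of the tangles, and noting that the cyclic gluing of at least three positive and three negative tangles ensures both $\GRA$ and $\GRB$ have circuit rank at least one per tangle of the matching sign, yields the claimed linear bound with the $\#K$ term accounting for the number of resulting closed components.

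The main obstacle is the combinatorial step in the last paragraph: unlike the alternating case (where a twist region contributes one edge to exactly one of $\GRA, \GRB$ in a clean bipartite way), in Montesinos diagrams a single rational tangle can produce both multi-edges (reducing into $\GRA$) and ``staircase'' single edges contributing to $\GRB$, with the precise contribution depending on the signs and parities of the continued-fraction entries. The bookkeeping must be done carefully enough to see that $t(D) - \#K$ is at most twice the total circuit rank of $\GRA \sqcup \GRB$, and this is where most of the work lies. Once that diagrammatic inequality is in place, the upper bound follows immediately by chaining it with $\vol < 2 v_8\, t(D)$, completing the corollary.
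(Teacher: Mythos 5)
Your overall route is the same one the paper intends: the lower bound via vanishing of the complex EPD count, Theorem \ref{thm:no2loops} applied to $S_A$ and (after mirroring) to $S_B$, and the Agol--Storm--Thurston bound of Theorem \ref{thm:ast-estimate}; the upper bound by chaining $\vol(S^3\setminus K) < 2v_8\, t(D)$ from Theorem \ref{thm:monte-volume} with a diagrammatic comparison of $t(D)$ against $\negeul(\GRA)+\negeul(\GRB)$. The lower-bound half of your argument is essentially complete, granted the facts you cite (adequacy and primeness of the reduced diagram, hyperbolicity from Theorem \ref{thm:monte-volume}, $\beta_K\neq 0$ and $\beta'_K\neq 0$ so that Theorem \ref{thm:no2loops} applies, and $||E_c||=0$ from \cite[Corollary~9.21]{fkp:gutsjp}); note only that the survey says this last vanishing holds for ``most'' Montesinos links, so you should verify that the hypotheses of that corollary really cover every reduced diagram with at least three positive and three negative tangles, on both the $A$-- and $B$--sides.

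The genuine gap is in the upper bound. You correctly reduce it to the inequality $t(D) \leq 2\left(\abs{\beta_K}+\abs{\beta'_K}-2\right)+\#K = 2\,\negeul(\GRA)+2\,\negeul(\GRB)+\#K$, but you do not prove it: you outline a strategy and explicitly concede that the bookkeeping remains to be done. That inequality is precisely where the content of the upper bound lies. Unlike the alternating case, where each twist region contributes cleanly to exactly one reduced graph and one gets Dasbach--Lin's identity $t(D)=\abs{\beta_K}+\abs{\beta'_K}$, in a Montesinos diagram whether a twist region collapses in $\GRA$ or in $\GRB$ depends on its handedness and on its (horizontal versus vertical) position inside the rational tangle, and your sketch asserts rather than verifies the key claims --- for instance that the remaining ``long'' twist regions are forced into cycles by the continued-fraction structure, and that the cyclic gluing produces circuit rank at least one per tangle of matching sign. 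In the source monograph this step is a separate counting argument carried out on the explicit state graphs of reduced Montesinos diagrams, and it is exactly where the factor $2$ and the additive $\#K$ term in the statement are needed; without it the chain $\vol(S^3\setminus K) < 2v_8\,t(D) \leq 4v_8\left(\abs{\beta_K}+\abs{\beta'_K}-2\right)+2v_8\,\#K$ is not established. To complete the proof you must either carry out that count in full (tracking, tangle by tangle, how many twist regions map to multi-edges versus chains in each of $\GA$, $\GB$, and how many independent cycles survive in the reduced graphs) or cite the corresponding lemma of \cite{fkp:gutsjp} directly.
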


\section{A worked example}\label{sec:example}
In this section, we illustrate several of the above theorems on the
two-component link  of Figure \ref{fig:effie}. The figure also shows the graphs
$H_A$, $\GA$, and $\GRA$ for this link diagram.

\begin{figure}
\includegraphics[height=1.2in]{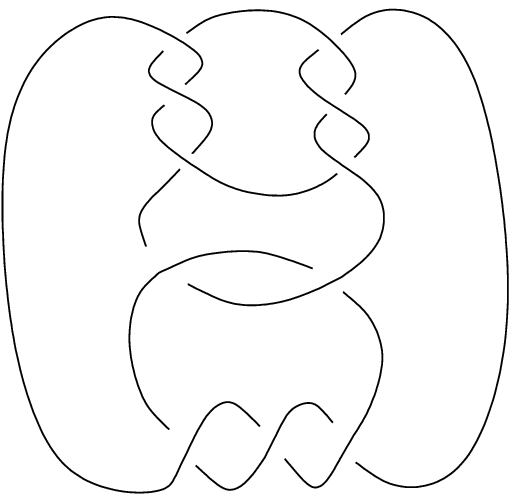}
\includegraphics[height=1.2in]{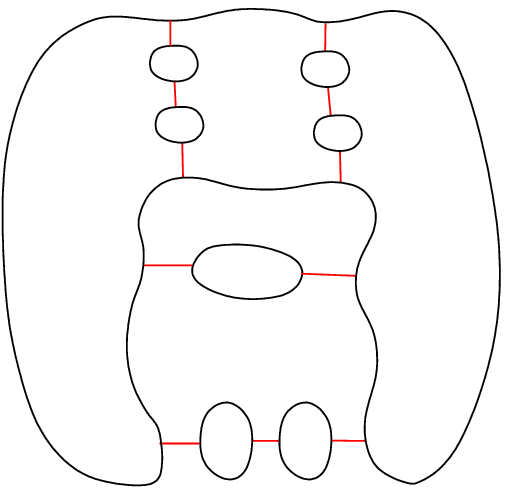}
\hspace{0.1in}
\includegraphics[height=1.2in]{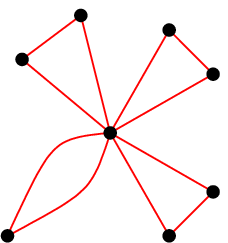}
\hspace{0.1in}
\includegraphics[height=1.2in]{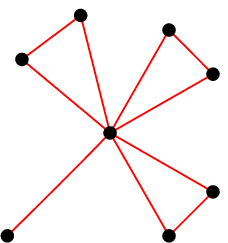}
\hspace{0.1in}
\caption{Diagram $D(K)$ of a two-component link, and graphs $H_A$,
  $\GA$, and $\GRA$. All of the discussion in Section \ref{sec:example} pertains to this link.}
\label{fig:effie}
\end{figure}

\begin{figure}
\includegraphics[height=1.4in]{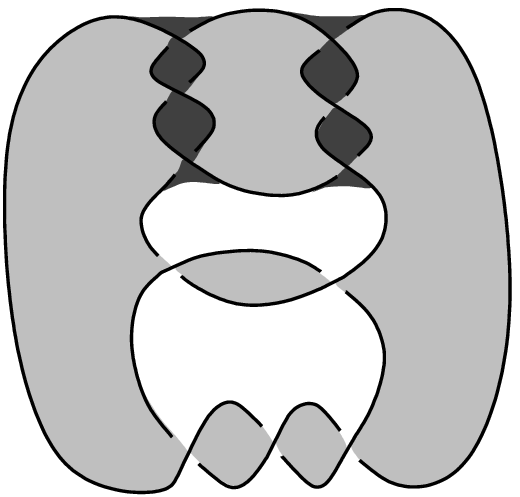}
\hspace{.2in}
\includegraphics[height=1.4in]{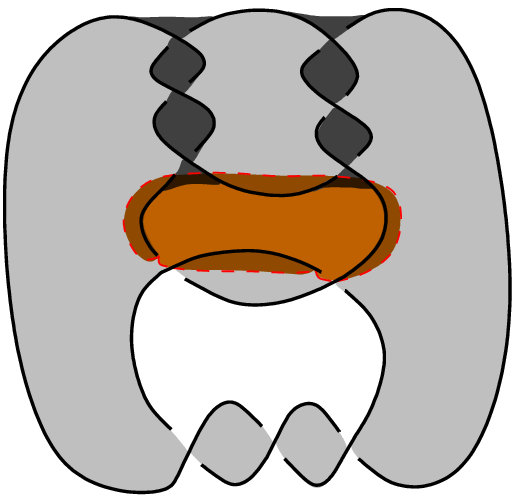}
\caption{Left: the state surface $S_A$ of the link of Figure
  \ref{fig:effie}. Right: the single EPD in $M_A$ lies below the surface $S_A$, and its boundary intersects $K$ at two points in the center of the figure.}
\label{fig:effie-EPD}
\end{figure}

In this example, it turns out that the manifold $M_A = S^3 \setminus
S_A$ contains a single essential product disk. This disk $D$ is shown
in Figure \ref{fig:effie-EPD}. Observe that this lone EPD corresponds
to the single $2$--edge loop in $\GA$. Note as well that collapsing
two edges of $\GA$ to a single edge of $\GRA$ changes the Euler
characteristic by $1$, while cutting $M_A$ along disk $D$ also changes
the Euler characteristic by $1$. Thus
$$\negeul(\GA) \: = \:  - \chi(\GA) = 3, \qquad \negeul (\GRA) \: = \:
- \chi(\GRA) = 2.$$

On the $3$--manifold side, recall that $\GA$ is a spine of $S_A$. Thus Alexander duality gives
$$\negeul( M_A)  \: = \: \negeul(S^3 \cut S_A) \: = \: \negeul(S_A) \: = \: \negeul(\GA) = 3.$$
Because the maximal $I$--bundle of $M_A$ is spanned by the single disk $D$, we have
$$\negeul(\guts(M_A)) \: = \: \negeul( M_A) - 1  \: = \: 2 \: = \:  \negeul (\GRA),$$
exactly as predicted by Theorem \ref{thm:guts-general} with $||E_c|| = 0$.

By Theorem \ref{thm:ast-estimate}, $\negeul(\guts(M_A))$  also gives a lower bound on the hyperbolic volume of $S^3 \setminus K$. In this example,
$$\negeul(\guts(M_A)) \:= \: \negeul(\GRA) \: =\: \abs{\beta'_K} - 1 \: = \: 2,$$
so the lower bound is $2 v_8 \approx 7.3276$. Meanwhile, the actual hyperbolic volume of the link in Figure \ref{fig:effie} is $\vol(S^3 \setminus K) \approx 11.3407$.

\begin{figure}
\includegraphics[height=1.4in]{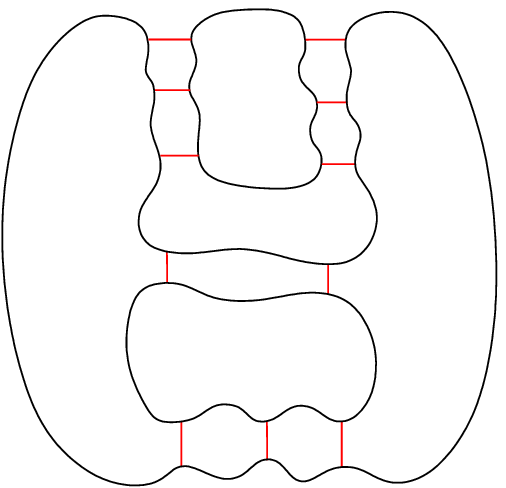}
\hspace{0.3in}
\includegraphics[height=1.2in]{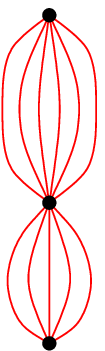}
\hspace{0.3in}
\includegraphics[height=1.2in]{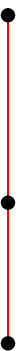}
\hspace{0.3in}
\includegraphics[height=1.4in]{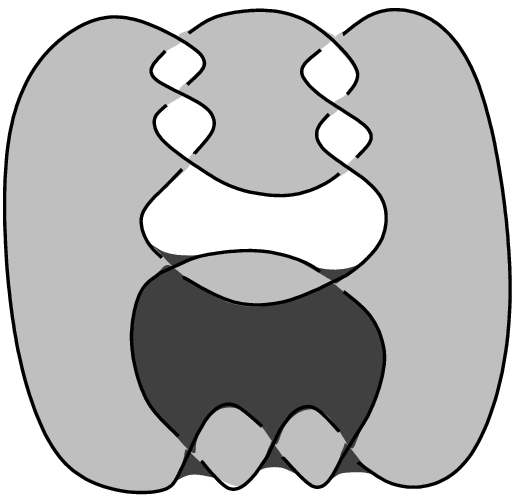}
\caption{The graphs $H_B$, $\GB$, and $\GRB$ for the link of Figure \ref{fig:effie}, and the corresponding surface $S_B$.}
\label{fig:effie-B}
\end{figure}

Turning to the all--$B$ resolution, Figure \ref{fig:effie-B} shows the graphs $H_B$, $\GB$, and $\GRB$, as well as the state surface $S_B$. This time, $\GRB$ is a tree, thus Theorem \ref{thm:fiber-tree} (applied to a reflected diagram) implies $S_B$ is a fiber. 

One important point to note is that even though $S^3 \setminus K$ is fibered, it does contain surfaces (such as $S_A$) with quite a lot of guts. Conversely, having $\abs{\beta'_K} > 0$, as we do here, only means that the surface $S_A$ is not a fiber -- it does \emph{not} rule out $S^3 \setminus K$ being fibered in another way, as indeed it is.


\section{A closer look at the polyhedral decomposition}\label{sec:polyhedra}
To prove all the results that were surveyed in Section \ref{sec:3D}, we cut $M_A$ along a collection of
disks, to obtain a decomposition of $M_A$ into ideal polyhedra.
Here, a (combinatorial) \emph{ideal polyhedron} is a 3--ball with a graph on its boundary, such
that complementary regions of the graph are simply connected, and the
vertices have been removed (i.e. lie at infinity).

Our decomposition is a generalization of Menasco's well--known polyhedral
decomposition  \cite{menasco:polyhedra}.  Menasco's work uses a link diagram to
decompose any link complement into ideal polyhedra.
When the diagram is alternating, the resulting polyhedra have several
nice properties: they are checkerboard colored, with 4--valent
vertices, and a well--understood gluing.  For alternating diagrams,
our polyhedra will be exactly the same as Menasco's.  More generally,
we will see that our polyhedral decomposition of $M_A$ also has a
checkerboard coloring and 4--valent vertices.

\subsection{Cutting along disks}

To begin, we need to visualize the state surface $S_A$ more carefully.
We constructed $S_A$ by first, taking a collection of disks bounded by
state circles, and then attaching bands at crossings.  Recall we
ensured the disks were below the projection plane.  We visualize the
disks as \emph{soup cans}.  That is, for each, a long cylinder runs
deep under the projection plane with a disk at the bottom.  Soup cans
will be nested, with outer state circles bounding deeper, wider soup
cans.  Isotope the diagram so that it lies on the projection plane,
except at crossings which run through a crossing ball.  When we are
finished, the surface $S_A$ lies below the projection plane, except
for bands that run through a small crossing ball.

In Figure \ref{fig:multiballons}, the state surface for this example
is shown lying below the projection plane (although soup cans have
been smoothed off at their sharp edges in this figure).

\begin{figure}
	\includegraphics[height=1.2in]{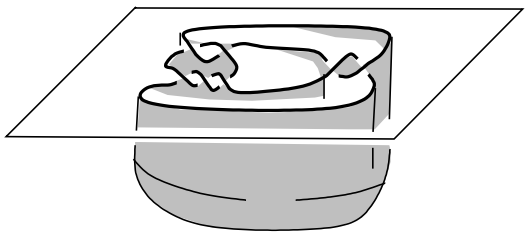}
  \caption{The surface $S_A$ is hanging below the projection plane.}
\label{fig:multiballons}
\end{figure}

We cut $M_A = S^3 \cut S_A$ along disks.  These disks come from
complementary regions of the graph of the link diagram on the
projection plane.  Notice that each such region corresponds to a
complementary region of the graph $H_A$, the graph of the
$A$--resolution.  To form the disk that we cut along, we isotope the
disk by pushing it under the projection plane slightly, keeping its
boundary on the state surface $S_A$, so that it meets the link a
minimal number of times.  Indeed, because $S_A$ itself lies on or
below the projection plane except in the crossing balls, we can push
the disk below the projection plane everywhere except possibly along
half--twisted rectangles at the crossings.  By further isotopy we can
arrange each disk so that its boundary runs meets the link only inside
the crossing ball. These isotoped disks are called \emph{white disks}.

For each region of the complement of $H_A$, we have a white disk that
meets the link only in crossing balls, and then only at
under--crossings.  The disk lies slightly below the projection plane
everywhere.  Figure \ref{fig:disk} gives an example.

\begin{figure}
  \begin{center}
    \begin{tabular}{ccccc}
       \includegraphics{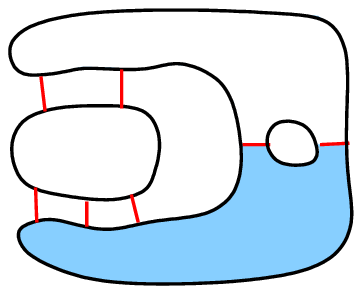}
      & \hspace{.2in} & 
      \includegraphics{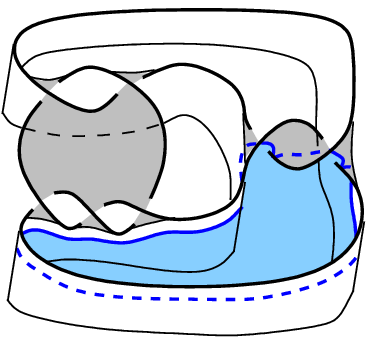}
    \end{tabular}
  \end{center}
  \caption{A region of $H_A$ together with the corresponding white
    disk lying just below the projection plane, with boundary (dashed
    line) on underside of shaded surface. }
  \label{fig:disk}
\end{figure}

Some of the white disks will not meet the link at all.  These disks
are isotopic to soup cans on $S_A$; that is, they are \emph{innermost disks}.  We will remove all such
white disks from consideration.  When we do so, the collection of all
remaining white disks, denoted $\CalD$, consists of those with
boundary on the state surface $S_A$ and on the link $K$.

It is actually straightforward to see that the components of
$M_A \cut \CalD$ are 3--balls, as follows.  There will be a single
component above the projection plane.  Since we cut along each region
of the projection graph, either along a disk of $\CalD$ or an
innermost soup can, this component above the projection plane must be
homeomorphic to a ball.  As for components which lie below the
projection plane, these lie between soup can disks.  Since any such
disk cuts the 3--ball below the projection plane into 3--balls, these
components must also each be homeomorphic to 3--balls.
In fact, we know more:

\begin{theorem}[Theorem 3.12 of \cite{fkp:gutsjp}]
\label{thm:3-ball}
Each component of $M_A \cut \CalD$ is an ideal polyhedron with
4--valent ideal vertices and faces colored in a checkerboard fashion:
the \emph{white faces} are the disks of $\CalD$, and the \emph{shaded
faces} contain, but are not restricted to, the innermost disks.
\end{theorem}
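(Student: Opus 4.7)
The excerpt has already reduced the theorem to verifying three structural claims about each $3$-ball component of $M_A \cut \CalD$: that its boundary admits a checkerboard coloring, that the white faces are pieces of $\CalD$ while the shaded faces are pieces of $S_A$ (including the innermost soup-can disks), and that the ideal vertices arising from $K$ are all $4$-valent. My plan is to describe the boundary $2$-complex of each component explicitly, then verify these properties locally at each crossing ball before gluing the local data together using the global above/below separation provided by the projection plane.

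For the coloring, I would first enumerate the $2$-cells on the boundary of each component. They are of two types: pieces of the white disks in $\CalD$ (which appear as cut surfaces on both sides of each cut), and pieces of $S_A$ (which include soup-can disks, half-twisted bands at crossings, and innermost disks that were excluded from $\CalD$ because they do not meet $K$). Declaring the first type to be the \emph{white faces} and the second to be the \emph{shaded faces}, I would verify the checkerboard property by observing that $\CalD$ meets $S_A$ transversely only along arcs that run across the half-twisted bands inside the crossing balls. Each such arc becomes a $1$-cell of the polyhedron separating a white face from a shaded face; away from crossings, adjacent cells touch only along arcs of $K$ (which contribute to ideal vertices), so white and shaded cells alternate around every edge of the boundary graph.

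For the $4$-valence, I would analyze a single crossing ball in isolation. Under the $A$-resolution, the crossing ball is partitioned by the half-twisted band and by the two strands of $K$ into four quadrants, and the above/below cut by the neighboring white disks assigns these quadrants to distinct polyhedral components of $M_A \cut \CalD$. Each ideal vertex arising from this crossing sees exactly four such quadrants meeting cyclically, with white and shaded faces alternating; this reproduces Menasco's $4$-valent vertex from the alternating setting and is the local source of the global $4$-valence.

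The main obstacle will be making this local analysis at the crossing ball robust to all configurations of surrounding state circles and all possible directions of the half-twist. In the alternating case, the quadrants are automatically assigned to the upper and lower polyhedra in a symmetric way, but in the general $A$-adequate setting, nested state circles and the asymmetry of the $A$-resolution can cause several quadrants to belong to the same polyhedron (via a face shared through a soup can) rather than to distinct ones. I would handle this by a case-by-case check at a single crossing ball, verifying that regardless of the nesting, each of the four strands meeting at the crossing produces exactly one edge at the corresponding ideal vertex with the correct checkerboard alternation, and then assembling the local combinatorial data into the global polyhedron via the projection-plane separation that makes the upper component and the nested lower components glue together consistently.
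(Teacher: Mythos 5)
There is a genuine gap. By the paper's definition, an ideal polyhedron is a $3$--ball with a graph on its boundary whose \emph{complementary regions are simply connected}; so the real content of the theorem is not the checkerboard coloring or the $4$--valence, but the fact that every face is a disk. The white faces are disks essentially by construction (they are the disks of $\CalD$), whereas the shaded faces are assembled globally from innermost disks together with the ``tentacles'' that propagate along state circles through the crossing bands, and it is far from automatic that such a union is simply connected --- a priori a shaded face could close up on itself and be an annulus or worse. This is exactly the step the paper flags as requiring real work, and it is where the hypothesis of $A$--adequacy is heavily used (for instance, a $1$--edge loop in $\GA$ obstructs the argument). Your proposal never addresses the simple connectivity of the shaded faces and never invokes $A$--adequacy anywhere, so as written it establishes only a decomposition into $3$--balls with checkerboard-colored boundary regions, not a decomposition into ideal polyhedra.

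Relatedly, you have misidentified where the difficulty lies. The checkerboard alternation is immediate from the construction: every edge is an arc of intersection of a white disk of $\CalD$ with (a regular neighborhood of) $S_A$, hence has a white face on one side and a shaded face on the other; and the $4$--valence follows directly from the combinatorial description of edges terminating in pairs at undercrossings (two edges at each end of a strand for the upper polyhedron, two pairs of edges at each crossing for the lower ones). By contrast, the local, crossing-ball-by-crossing-ball analysis you propose as the ``main obstacle'' cannot detect whether a shaded face is simply connected, because that is a global property of how tentacles spread across the whole graph $H_A$; no amount of case-checking at a single crossing will rule out an essential loop in a shaded face. A minor additional inaccuracy: the white disks do not meet $S_A$ only inside crossing balls --- their boundaries run along $S_A$ (along the soup cans) between crossings and meet the link only at undercrossings, which is what makes the edges run from crossing to crossing.
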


The edges of the ideal polyhedron are given by the intersection of
white disks in $\CalD$ with (the boundary of a regular neighborhood
of) $S_A$.  Each edge runs between strands of the link.
The ideal vertices lie on the torus boundary of the tubular
neighborhood of a link component.
The regions (faces) come from white disks (white faces) and portions
of the surface $S_A$, which we shade.

Note that each edge bounds a white disk in $\CalD$ on one side, and a
portion of the shaded surface $S_A$ on the other side.  Thus, by
construction, we have a checkerboard coloring of the 2--dimensional
regions of our decomposition.  Since the white regions are known to be
disks, showing that our 3--balls are actually polyhedra amounts to
showing that the shaded regions are also simply connected.  Showing
this requires some work, and the hypothesis of $A$--adequacy is
heavily used. The interested reader is referred to \cite[Chapter
  3]{fkp:gutsjp} for the details.

\subsection{Combinatorial descriptions of the polyhedra}\label{sec:combinatorial}

We need a simpler description of our polyhedra than that afforded by
the 3--dimensional pictures of the last subsection.  We obtain
2--dimensional descriptions in terms of how the white and shaded faces
are super-imposed on the projection plane, and how these faces
interact with the planar graph $H_A$.  These descriptions are the
starting point for our proofs in \cite{fkp:gutsjp}.  In this
subsection we briefly highlight the main characteristics of these
descriptions.

Note that in the figures, we often use different colors to indicate
different shaded faces.  All these colored regions come from the
surface $S_A$.

\subsubsection{Lower polyhedra}

The lower polyhedra come from regions bounded between soup cans.
Recall that $s_A$ denotes the union of state circles of the all--$A$
resolution (i.e. without the added edges of $H_A$ corresponding to
crossings).  As a result, the regions bounded by soup cans will be in
one--to--one correspondence with non-trivial components of the
complement of $s_A$.

Given a lower polyhedron, let $R$ denote the corresponding non-trivial
component of the complement of $s_A$.  The white faces of the
polyhedron will correspond to the non-trivial regions of $H_A$ in
$R$.  Since these white faces lie below the projection plane, except
in crossing balls, the only portion of the knot that is visible from inside a lower polyhedron  is
a small segment of a crossing ball.  This results in the following combinatorial description.
\begin{itemize}
  \item  Ideal edges of the lower polyhedra run from crossing to crossing.
  \item  Ideal vertices correspond to crossings. At each crossing, two
    ideal edges bounding a disk from one non-trivial region of $H_A$
    meet two ideal edges bounding a disk from another non-trivial region
    (on the opposite side of the crossing). Thus the vertices are 4--valent.
  \item  Shaded faces correspond exactly to soup cans.
\end{itemize}

As a result, each lower polyhedron is combinatorially identical to  the checkerboard polyhedron of an
alternating sub-diagram of $D(K)$, where the sub-diagram corresponds to region
$R$.   This is illustrated in Figure \ref{fig:lowerpoly}, for the knot diagram of Figure \ref{fig:statesurface}.

\begin{figure}
  \begin{center}
    \begin{tabular}{ccccc}
      
      \includegraphics{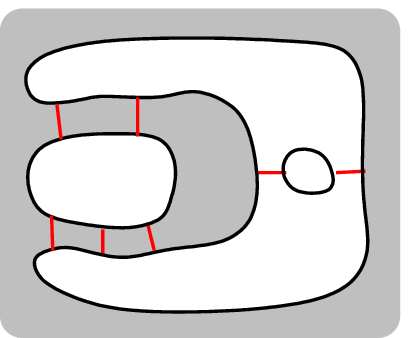} & \hspace{.2in}
      & \includegraphics{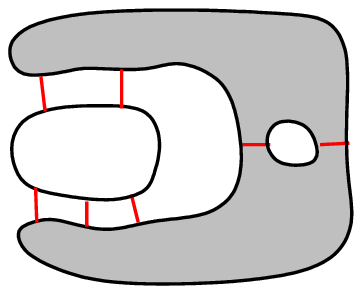}\\
      \includegraphics{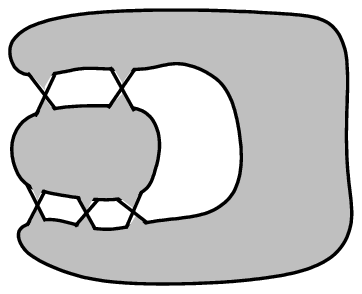} & \hspace{.2in} & 
      \includegraphics{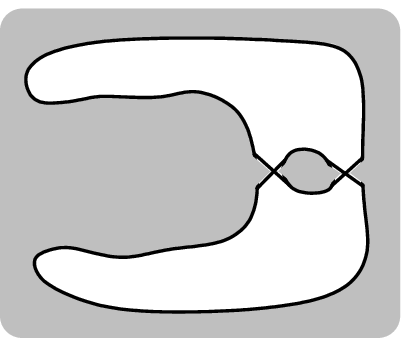}
    \end{tabular}
  \end{center}
  \caption{Top row: The two non-trivial regions of the graph $H_A$ of
    Figure \ref{fig:statesurface}. Second row: The corresponding lower
    polyhedra. }
  \label{fig:lowerpoly}
\end{figure}

Schematically, to sketch a lower polyhedron, start by drawing a
portion of $H_A$ which lies inside a nontrivial region of the
complement of $s_A$.  Mark an ideal vertex at the center of each
segment of $H_A$.  Connect these dots by edges bounding white disks,
as in Figure \ref{fig:lower-schematic}.

\begin{figure}
	\includegraphics{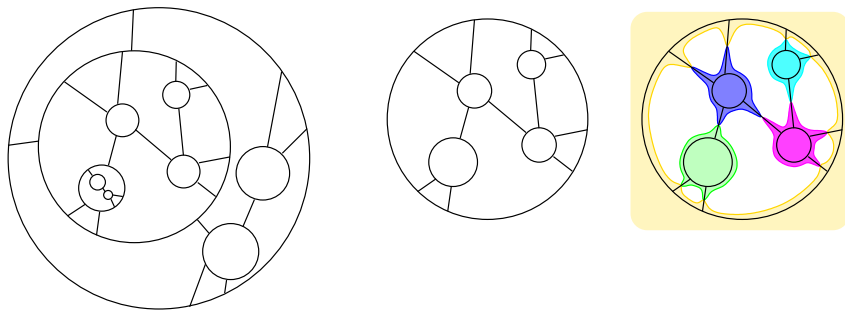}
\caption{Left to right: An example graph $H_A$.  A subgraph
  corresponding to a region of the complement of $s_A$.  White and
  shaded faces of the corresponding lower polyhedron\index{lower
    polyhedra!example}.}
\label{fig:lower-schematic}
\end{figure}

\subsubsection{The upper polyhedron}

On the upper polyhedron, ideal vertices correspond to strands of the
link visible from inside the upper 3--ball.  Since we cut along white
faces and the surface $S_A$, both of which lie below the projection plane 
except at crossings, the upper polyhedron can ``see''  the entire link diagram except for small portions cut off at each undercrossing.  

Thus, ideal vertices of the upper polyhedron correspond to strands of the link between
undercrossings.  In Figure \ref{fig:under-crossing}, the surface $S_A$
is shown in green and gold.  There are four ideal edges meeting a
crossing, labeled $e_1$ through $e_4$ in this figure.  There are
actually three ideal vertices in the figure: one is unlabeled,
corresponding to the strand running over the crossing, and two are
labeled $v_1$ and $v_2$, corresponding to the strands running into the
undercrossing.

\begin{figure}
  \begin{center}
    \input{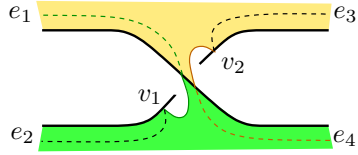}
  \end{center}
  \caption{Shown are portions of four ideal edges, terminating at
    undercrossings on a single crossing.  Ideal edges $e_1$ and $e_2$
    bound the same white disk and terminate at the ideal vertex $v_1$.
    Ideal edges $e_3$ and $e_4$ bound the same white disk and
    terminate at the ideal vertex $v_2$. Figure first appeared in
    \cite{fkp:gutsjp}.  }
  \label{fig:under-crossing}
\end{figure}

The surface $S_A$ runs through the crossing in a twisted rectangle.
Looking again at Figure \ref{fig:under-crossing}, note that the gold
portion of $S_A$ at the top of that figure is not cut off at the
crossing by an ideal edge terminating at $v_2$.  Instead, it follows $e_4$
through the crossing and along the underside of the figure, between
$e_4$ and the link.  Similarly for the green portion of $S_A$:
it follows the edge $e_1$ through the crossing and continues between
the edge and the link.

We visualize these thin portions of shaded face between an edge and
the link as \emph{tentacles}, and sketch them onto $H_A$ running from
the top--right of a segment to the base of the segment, and then along
an edge of $H_A$ (Figure
\ref{fig:tentacle}).  Similarly for the bottom--left.  

\begin{figure}
\includegraphics{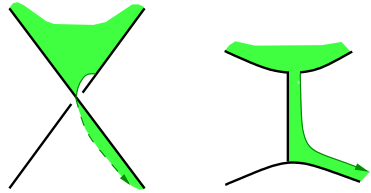}
\caption{Left: A tentacle\index{tentacle} continues a shaded face in
  the upper $3$--ball. Right: visualization of the tentacle on the
  graph $H_A$.}
\label{fig:tentacle}
\end{figure}

In Figure \ref{fig:tentacle}, we have drawn the tentacle by
removing a bit of edge of $H_A$.  When we do this for each segment,
top--right and bottom--left, the remaining connected components of
$H_A$ correspond exactly to ideal vertices.

Each ideal vertex of the upper polyhedron begins at the top--right (bottom--left) of
a segment, and continues along a state circle of $s_A$ until it meets another
segment.  In the diagram, this will correspond to an undercrossing.  As
we observed above, edges terminate at undercrossings.  Figure
\ref{fig:undercr-tentacles} shows the portions of ideal edges sketched
schematically onto $H_A$. 

\begin{figure}
  \input{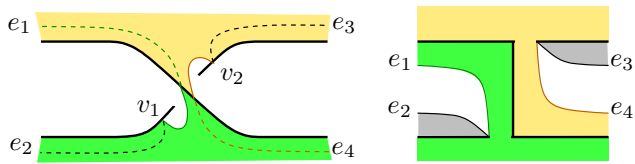}
  \caption{The ideal edges and shaded faces around the crossing of
    Figure \ref{fig:under-crossing}}
  \label{fig:undercr-tentacles}
\end{figure}

As for the shaded faces, we have seen that they extend in tentacles
through segments.  Where do they begin?  In fact, each shaded face
originates from an innermost disk.

To complete our combinatorial description of the upper
polyhedron, we color each innermost disk a unique color.  Starting
with the segments leading out of innermost disks, we sketch in
tentacles, removing portions of $H_A$.  Note that a tentacle will
continue past segments of $H_A$ on the opposite side of the state
circle without terminating, spawning new tentacles, but will terminate
at a segment on the same side of the state surface.  This is shown in
Figure \ref{fig:tentacle-mult}.  Since $H_A$ is a finite graph, the
process terminates.

\begin{figure}
  \begin{center}
    \input{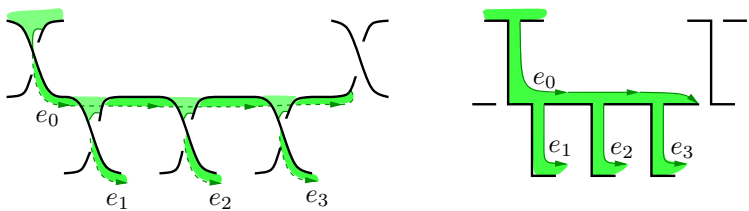}
  \end{center}
  \caption{Left: part of a shaded face in an upper $3$--ball. Right:
    the corresponding picture, superimposed on $H_A$.  The tentacle
    next to the ideal edge $e_0$ terminates at a segment on the same
    side of the state circle on $H_A$.  It runs past segments on the
    opposite side of the state circle, spawning new tentacles
    associated to ideal edges $e_1$, $e_2$, $e_3$.}
  \label{fig:tentacle-mult}
\end{figure}

An example is shown in Figure \ref{fig:tentacleupper}.  For this
example, there are two innermost disks, which we color green and gold.  Corresponding tentacles are shown.

\begin{figure}
  \begin{center}
    \includegraphics{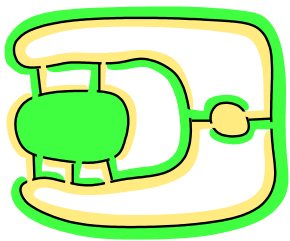}
  \end{center}
  \caption{An example of the combinatorics of the upper polyhedron. In this example, the polyhedron is combinatorially a prism over an ideal heptagon. This prism is an $I$--bundle over a subsurface of $S_A$.}
  \label{fig:tentacleupper}
\end{figure}

\subsection{Prime polyhedra detect fibers} 

In the last subsection, we saw that the
lower polyhedra correspond to alternating link diagrams.  It may happen that
one of these alternating diagrams is not prime. In other words, the alternating link diagram contains a pair of regions that meet along more than one edge. The polyhedral analogue of this notion is the following.

\begin{define}\label{def:prime-poly}
A combinatorial polyhedron $P$ is called \emph{prime} if every pair of faces of $P$ meet along at most one edge.
\end{define}

Primeness is an extremely desirable property in a polyhedral decomposition, for the following reason. The theory of \emph{normal surfaces}, developed by Haken \cite{haken:normal}, states that given a polyhedral decomposition of a manifold $M$, every essential surface $S \subset M$ can be moved into a form where it intersects each polyhedron in standard disks, called \emph{normal disks}. If this essential surface is (say) a compression disk $D \subset M_A$, then we may intersect $D$ with the union of white faces $\CalD$ to form a collection of arcs in $D$. An outermost arc must cut off a bigon. But, by Definition \ref{def:prime-poly}, a prime polyhedron cannot contain a bigon. Thus, once we obtain a decomposition of $M_A$ into prime polyhedra, Theorem \ref{thm:incompress} will immediately follow.

In practice, the polyhedra described in the last subsection may sometimes fail to be prime. Whenever this occurs, we need to cut them into smaller, prime pieces. Before we describe this cutting process, we record another powerful feature of the polyhedral decomposition.

Notice that the polyhedron in Figure \ref{fig:tentacleupper} is combinatorially a prism over an ideal heptagon: here, the two shaded faces are the horizontal faces of the prism, while the seven white faces are lateral, vertical essential product disks. In other words, the entire polyhedron is an $I$--bundle over an ideal polygon. It turns out that under the hypothesis of primeness,  when the upper polyhedron has this product structure, then so do all of the lower polyhedra, and so does the manifold $M_A = S^3 \cut S_A$.  

\begin{prop}[Lemma 5.8 of \cite{fkp:gutsjp}]\label{prop:product-detect}
Suppose that in the polyhedral decomposition of $M_A$ corresponding to an $A$--adequate diagram, every ideal polyhedron is prime.
Then,  the following are equivalent:
\begin{enumerate}
\item Every white face is an ideal bigon, i.e.\ an essential product disk.
\item The upper polyhedron is a prism over an ideal polygon.
\item Every polyhedron is a prism over an ideal polygon.
\item Every  region in $R$ that corresponds to a lower polyhedron is bounded by two state circles, connected by edges of $\GA$ that are identified to a single edge of $\GRA$.
\item Every edge of $\GRA$ is separating, i.e.\ $\GRA$ is a tree.
\end{enumerate}
\end{prop}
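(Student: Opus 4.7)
My plan is to prove the five-way equivalence by running the cycle (3) $\Rightarrow$ (2) $\Rightarrow$ (1) $\Rightarrow$ (3), and independently (3) $\Leftrightarrow$ (4) $\Leftrightarrow$ (5). The implications (3) $\Rightarrow$ (2) and (3) $\Rightarrow$ (1) are immediate from the definition of a prism over an ideal polygon: its lateral faces are precisely ideal bigons, i.e.\ essential product disks.

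For (2) $\Rightarrow$ (1), I would exploit the fact that each white disk of $\CalD$ lies just below the projection plane and therefore appears as a white face of both the upper polyhedron and exactly one lower polyhedron. Thus if every white face of the upper polyhedron is a bigon, the same bigon property is inherited by every white face of every lower polyhedron. For (1) $\Rightarrow$ (3), I would carry out an Euler characteristic count inside an arbitrary polyhedron $P$. With $V$ ideal vertices and $4$--valence, $E=2V$; since each edge borders one white and one shaded face, all white faces being bigons forces $F_w=V$, and Euler's formula then gives exactly $F_s=2$ shaded faces. Primeness of $P$ forbids any bigon from sharing both of its edges with a single shaded face, so each bigon bridges the two shaded faces with one edge on each, yielding the prism structure.

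For (3) $\Rightarrow$ (4), a lower polyhedron being a prism translates directly: its region has exactly two bounding state circles (two shaded faces), with all crossings between them parallel (bigon white faces), hence collapsing to a single edge of $\GRA$. The heart of (4) $\Leftrightarrow$ (5) is a planar counting identity. Since $s_A$ consists of $V_A$ disjoint circles in $S^2$, a direct Euler computation gives
\[
\sum_{R} (m_R - 1) \: = \: V_A - 1,
\]
where the sum runs over complementary regions $R$ of $s_A$ in $S^2$, and $m_R$ is the number of state circles bounding $R$. Under (4), every non-trivial region contributes $1$ and corresponds to a distinct edge of $\GRA$, forcing $E(\GRA) = V_A - 1$; since $\GRA$ is connected (inherited from connectedness of $D$), this means $\GRA$ is a tree. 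For (5) $\Rightarrow$ (4), and hence to close the cycle back to (3) via the prism reassembly of each annular region plus the face-sharing argument already used in (2) $\Rightarrow$ (1), the same counting gives the lower bound on the number of non-trivial regions, and combined with $A$--adequacy forces every such region to have exactly two boundary circles with a single parallel class of crossings.

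The main obstacle I anticipate is precisely the direction (5) $\Rightarrow$ (4): one must rule out non-trivial regions with three or more boundary circles even when trivial pair-of-pants regions complicate the counting. Handling this will require a careful case analysis of how arcs of $H_A$ can be distributed across a region of high connectivity without either creating a cycle in $\GRA$ (contradicting the tree hypothesis) or violating $A$--adequacy, and it is here that the hypothesis that every polyhedron is \emph{prime} should enter most crucially to rule out pathological configurations.
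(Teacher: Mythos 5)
Your cycle (3) $\Rightarrow$ (2) $\Rightarrow$ (1) $\Rightarrow$ (3) is essentially right and is the same idea the paper sketches: with $4$--valent vertices and checkerboard coloring, white bigons force $E=2V$, $F_w=V$, hence exactly two shaded faces, and primeness makes every bigon bridge the two shaded faces, which is precisely the ``bigons strung end to end into a cycle of lateral faces'' picture. (One small caveat: in (2) $\Rightarrow$ (1) you should also rule out the degenerate reading of ``prism'' in which the two polygonal faces are white and the lateral bigons shaded.)

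The genuine gap is in (4) $\Leftrightarrow$ (5), and it is exactly the step you defer. Your counting is carried out over complementary regions of $s_A$, but under the standing hypothesis that every polyhedron is prime, the lower polyhedra correspond to nontrivial complementary regions of $s_A \cup \A$, where $\A$ is the maximal collection of non-prime arcs; the paper says explicitly that item (4) refers to those regions. Ignoring $\A$, the statement you aim to prove for (5) $\Rightarrow$ (4) is simply false: take disjoint state circles $C_2, C_3$ nested inside $C_1$, with segments of $H_A$ joining $C_1$ to $C_2$ and $C_1$ to $C_3$ but none joining $C_2$ to $C_3$. Then $\GRA$ is a path, hence a tree, yet the unique nontrivial region of the complement of $s_A$ is a pair of pants with three boundary circles; only after cutting along the non-prime arc on $C_1$ separating the two groups of segments do the resulting regions satisfy (4). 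The same example breaks your (4) $\Rightarrow$ (5) step as written, since that $s_A$--region contributes $2$, not $1$, to $\sum_R (m_R-1) = V_A - 1$ even though (4) holds for the prime decomposition. So the ``careful case analysis'' you postpone is the actual content of the implication, and it has to be organized around $\A$: your identity does survive cutting along arcs (an arc with endpoints on one boundary component splits a region without changing the sum), but you must then also show that distinct regions of $s_A\cup\A$ determine distinct edges of $\GRA$, that trivial regions are disks bounded by a single circle (this needs connectedness of $H_A$), and that inside each prime region the segments connect all of its boundary components. None of this is in the write-up, so the equivalence of (4) and (5) with the rest — and hence the full proposition — is not yet established.
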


\begin{remark} In the decomposition process of $M_A$ that we have
described so far, the polyhedra may not be prime. However, we will see  in the next subsection that primeness can always be achieved after additional cutting.
\end{remark}

The proof of equivalence of the conditions in Proposition \ref{prop:product-detect} is not hard. For example, if every white face is a bigon, then in every polyhedron, those bigons must be strung end to end, forming a cycle of lateral faces in a prism. Note as well that if every polyhedron is a prism, i.e. a product, then this product structure extends over the bigon faces to imply that $M_A \cong S_A \times I$. This immediately gives one implication in Theorem \ref{thm:fiber-tree}: if $\GRA$ is a tree, then $S_A$ is a fiber.

The converse implication (if $S_A$ is a fiber, then $\GRA$ is a tree) requires knowing that our polyhedra \emph{detect} the JSJ decomposition of $M_A$. In other words: if part (or all) of $M_A$ is an $I$--bundle, then this $I$--bundle structure must be visible in the individual polyhedra. This property also follows from primeness.

\subsection{Ensuring Primeness}\label{subsec:primeness}

We have seen that primeness is a desirable property of the polyhedral decomposition. Here, we describe a way to detect when a lower polyhedron is not prime, and a way to fix this situation.

\begin{define}
The graph $H_A$ is \emph{non-prime} if there exists a state circle $C$
of $s_A$ and an arc $\alpha$ with both endpoints on $C$, such that the
interior of $\alpha$ is disjoint from $H_A$, and $\alpha$ is not isotopic into $C$ in the complement of $H_A$. The arc
$\alpha$ is called a \emph{non-prime arc}.
\label{def:non-prime}
\end{define}

Each non-prime arc is lies  in a single white face of $\CalD$, while its shadow on the soup can below lies in a single shaded face. Thus every non-prime arc indicates that a lower polyhedron violates Definition \ref{def:prime-poly}. See Figure \ref{fig:nonprime-bigon}.

Whenever we find a non-prime arc, we modify the polyhedral
decomposition as follows: push the arc $\alpha$ down against the soup
can of the state circle $C$.  This divides the corresponding lower
polyhedron into two.  Figure \ref{fig:nonprime-bigon} shows a
3--dimensional view of this cutting process.

\begin{figure}
  \includegraphics{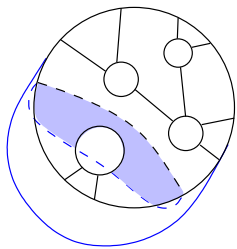}
  \caption{Split lower polyhedron along a non-prime arc.}
  \label{fig:nonprime-bigon}
\end{figure}

Combinatorially, we cut a lower polyhedron along the non-prime arc, as
in Figure \ref{fig:nonprime-lower}.  The lower polyhedra now
correspond to alternating links whose state circles contain $\alpha$.
On the boundary of the upper polyhedron, $\alpha$ meets two tentacles.
These will be joined into the same shaded face, by attaching both
tentacles to a regular neighborhood of $\alpha$.  See Figure
\ref{fig:nonprime-top}.

\begin{figure}
  \includegraphics{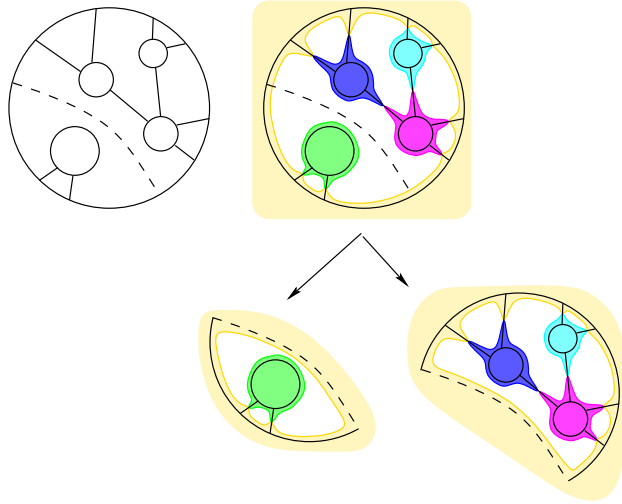}
  \caption{Splitting a lower polyhedron into two along a non-prime arc.}
  \label{fig:nonprime-lower}
\end{figure}

\begin{figure}
\includegraphics{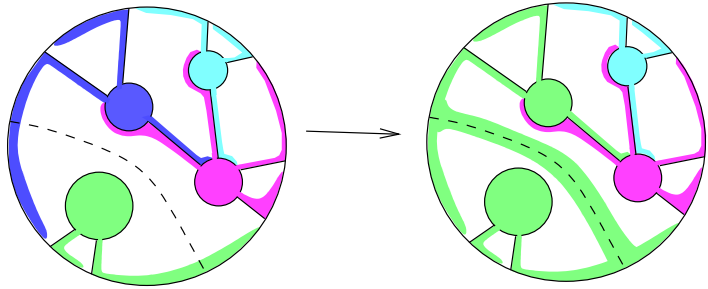}
\caption{Splitting the upper polyhedron along a non-prime arc.}
\label{fig:nonprime-top}
\end{figure}

Repeat this process of splitting along non-prime arcs until there are
no more non-prime arcs. This ensures that all the lower polyhedra are prime. Then, one can show that the upper polyhedron is prime as well.

 A decomposition along a maximal collection of
  non-prime arcs $\A$ is called a \emph{prime decomposition}. 
  Its main features are summarized as follows:
  \begin{itemize}
  \item It decomposes $M_A$ into one upper and at least one lower
    polyhedron.
    \item Every polyhedron is prime.
  \item Every polyhedron is checkerboard colored, with 4--valent vertices.
  \item White faces of the polyhedra correspond to regions of the
    complement of $H_A \cup \A$.
  \item Lower polyhedra are in one-to-one correspondence with
    nontrivial complementary regions of $s_A \cup
   \A$.
  \item Each lower polyhedron is identical to the
    checkerboard polyhedron of an alternating link, where the
    alternating link is obtained by taking the restriction of $H_A\cup \A$ to the corresponding region of $s_A
    \cup \A$, and replacing segments of $H_A$ with
    crossings (using the $A$--resolution).
\end{itemize}

Another important feature is that Proposition
\ref{prop:product-detect} holds for the prime polyhedral decomposition
that we have just described. It is worth noting that since the lower
polyhedra are are in one-to-one correspondence with nontrivial
complementary regions of $s_A \cup \A$, part (4) of the proposition
will refer to these regions of the diagram.

Having obtained a prime polyhedral decomposition, we may apply normal surface theory to study various pieces in the JSJ decomposition of
$M_A$. 
Recall that the JSJ decomposition yields three kinds of pieces:
$I$--bundles, solid tori, and the guts. To compute $\negeul(\guts(M_A))$, we need to understand the $I$--bundle components  that have nonzero Euler characteristic. We prove the following.

\begin{theorem}[Theorem 4.4 of \cite{fkp:gutsjp}]\label{thm:epd-span}
Let $B$ be an $I$--bundle component of the JSJ decomposition of $M_A$, such that $\chi(B) < 0$. Then $B$ is spanned by a collection of essential product
disks  $\{ D_1, \dots, D_n \}$, with the property that each $D_i$ is
embedded in a single polyhedron in the polyhedral decomposition of
$M_A$.
\end{theorem}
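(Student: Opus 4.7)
The plan is to start with any spanning collection of EPDs for $B$ and modify it via a surgery procedure until each disk lies in a single polyhedron. Such a collection exists: since $B$ is an $I$--bundle over a base surface $F$ with $\chi(F) = \chi(B) < 0$, the base $F$ admits a collection of essential arcs that cut it into disks, and lifting each such arc vertically in the $I$--bundle produces an EPD in $B$. The resulting collection spans $B$ in the sense of Definition \ref{def:epd}. So the task is to arrange, by surgery, that each EPD lies inside a single polyhedron of the prime decomposition of $M_A$ from Section \ref{subsec:primeness}.

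First I would put the EPDs $\{E_1, \dots, E_m\}$ into normal form with respect to the polyhedral decomposition, isotoping each $E_i$ so as to minimize the intersection $E_i \cap \CalD$ with the union of white faces. Since each white face is essential in $M_A$ and each $E_i$ is essential, standard innermost-disk and outermost-arc arguments imply that $E_i \cap \CalD$ is a disjoint union of properly embedded arcs in $E_i$, with no closed curves of intersection. I would then induct on $N = \sum_{i=1}^m \abs{E_i \cap \CalD}$. The base case $N = 0$ is immediate, since then each $E_i$ is already contained in a single polyhedron.

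For the inductive step, choose an arc $\alpha$ of $E_i \cap \CalD$ that is outermost on some $E_i$, cutting off a subdisk $E_i' \subset E_i$ lying in a single polyhedron $P$. The boundary of $E_i'$ decomposes as $\alpha \cup \beta$, where $\beta \subset \partial E_i \subset \partial M_A$ sits on the shaded faces of $P$ and possibly on the parabolic locus. The key claim is that $E_i'$ is itself an EPD in $P$. If $\beta$ missed the parabolic locus entirely, then $E_i'$ would be a disk with boundary on a single white face and adjacent shaded faces sharing more than one edge with it, contradicting primeness of $P$ in Definition \ref{def:prime-poly}, or else a compression of a shaded face, contradicting the essentiality of $S_A$ given by Theorem \ref{thm:incompress}. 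A parallel argument, using primeness to rule out faces meeting along a single bigon and using essentiality to rule out boundary compressions, disposes of the case in which $\beta$ meets the parabolic locus exactly once. Hence $\beta$ meets the parabolic locus exactly twice and $E_i'$ is an EPD in $P$.

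Having produced $E_i'$ as an EPD inside $P$, I would replace $E_i$ in the spanning collection by $E_i'$ together with its complementary piece in $E_i$ (and, if necessary, a parallel copy of $E_i'$ pushed into the product structure of $B$ to preserve the spanning property). The new collection still spans $B$ but has strictly smaller $N$, completing the inductive step. The main obstacle is the case analysis in the inductive step: proving that outermost subdisks are always EPDs requires the full force of primeness, which is precisely why the non-prime arcs $\A$ of Section \ref{subsec:primeness} were introduced, together with the essentiality of $S_A$. This analysis is the heart of the argument and is where the bulk of the technical work in \cite[Chapter 4]{fkp:gutsjp} should be concentrated.
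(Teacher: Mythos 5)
Your overall plan (normalize the disks against the white faces $\CalD$, then use primeness of the polyhedra) is in the right spirit, but the inductive step contains the decisive gap, and it is exactly the point where the actual argument in \cite[Chapter 4]{fkp:gutsjp} has to work hard. First, an outermost arc $\alpha$ of $E_i \cap \CalD$ cuts off a subdisk $E_i'$ whose boundary consists of $\alpha$, lying in a white face, together with $\beta \subset \partial E_i$. A white face lies in the \emph{interior} of $M_A$, so $E_i'$ is not properly embedded with boundary on $\partial M_A$ and therefore cannot literally be an EPD in the sense of Definition \ref{def:epd}, no matter how many times $\beta$ meets the parabolic locus; to make it one you must push $\alpha$ off the white face across an ideal vertex of that face (a ``parabolic compression''), which changes the count of parabolic intersections and forces you to check that the resulting disk is essential rather than boundary--parallel in its polyhedron. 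Second, your case analysis proves too much: $\partial E_i$ meets the parabolic locus in exactly two points, while any disk cut along at least one arc of $E_i \cap \CalD$ has at least two outermost pieces, so at most one outermost piece can meet the parabolic locus twice. If, as you claim, the cases of zero and one parabolic intersection were both impossible, it would follow that every normalized EPD satisfies $E_i \cap \CalD = \emptyset$, i.e.\ already lies in a single polyhedron --- which is false in general and would make the theorem an immediate consequence of normalization. In fact the one--intersection case carries no primeness contradiction (the faces involved are one white face meeting two shaded faces along single edges, the shaded faces meeting only at an ideal vertex); it is the generic configuration: the white faces cut a normalized EPD into a chain whose two end pieces meet the parabolic locus once each and whose intermediate pieces are normal squares meeting it not at all. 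Converting such a chain, via parabolic compressions, into honest EPDs each embedded in a single polyhedron, and then verifying that the new disks still span the same $I$--bundle, is the content of the ``normal squares and gluings'' analysis in \cite{fkp:gutsjp}; your replacement move (``$E_i'$ together with its complementary piece, plus a parallel copy if necessary'') inserts objects that are not EPDs at all (the complementary piece has part of its boundary on a white face and, under your count, misses the parabolic locus entirely), so the induction hypothesis no longer applies.

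The initial step is also not free. A vertical rectangle over an essential arc of the base surface $F$ is an EPD only when the endpoints of the arc lie on the portion of $\partial F$ covered by the parabolic locus; the vertical boundary of an $I$--bundle component of the characteristic submanifold also contains frontier annuli properly embedded in the interior of $M_A$, and an arc ending on such an annulus lifts to a vertical disk whose boundary does not even lie on $\partial M_A$. Proving that a component with $\chi(B) < 0$ is nonetheless spanned by genuine essential product disks is a separate lemma in \cite{fkp:gutsjp}, requiring its own argument (parabolic compression of the frontier annuli using the structure of $M_A$), and does not follow merely from $\chi(F) < 0$. Note that this survey does not reproduce the proof of Theorem \ref{thm:epd-span}; both of the gaps above correspond to the places where the monograph's proof does its substantive work.
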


The EPDs
in the spanning set that lie in the \emph{lower} polyhedra of the
decompositions are well understood; they are in one-to-one
correspondence with 2--edge loops in the state graph $\GA$.  The EPDs
in the spanning set that lie in the upper polyhedron are
\emph{complex}; they are not obtainable in terms in the
lower polyhedra.  These are exactly the EPDs counted by the quantity $||E_c||$ of Theorems \ref{thm:guts-general}, \ref{thm:volume} and \ref{thm:no2loops}.
The EPDs in the upper polyhedron  also correspond to 2-edge loops in $\GA$, but the correspondence  is not one-to-one. In special cases of link diagrams, we can understand the combinatorial structure of the polyhedral decomposition well enough to show that $||E_c|| = 0$. This gives, for instance, Corollaries  \ref{cor:positive-vol-jones} and \ref{cor:jones-volumemonte}.

Our results about  normal surfaces in the polyhedral decomposition of $M_A$ can likely be used to attack other topological problems
about $A$--adequate links. We refer the reader  to  \cite[Chapter 10]{fkp:gutsjp}
for a detailed discussion of some of these open questions.

\bibliographystyle{hamsplain}
\bibliography{biblio}

\end{document}